\def\cprime{$'$} 
  \def\polhk#1{\setbox0=\hbox{#1}{\ooalign{\hidewidth
  \lower1.5ex\hbox{`}\hidewidth\crcr\unhbox0}}}
\newtheorem{theorem}{Theorem}[section]
\newtheorem{corollary}{Corollary}[section]
\newtheorem{example}{Example}[section]
\newtheorem{lemma}{Lemma}[section]
\newtheorem{proposition}{Proposition}[section]
\theoremstyle{definition}
\newtheorem{definition}{Definition}[section]
\newtheorem{remark}{Remark}[section]
\newcommand{\x}{\boldsymbol{x}}
\newcommand{\pp}{\boldsymbol{p}}
\newcommand{\R}{\mathbb{R}}
\newcommand{\C}{\mathbb{C}}
\newcommand{\p}{\mathbb{P}}
\newcommand{\AAA}{\mathbb{A}}
\newcommand{\g}{\mathfrak g}
\newcommand{\gh}{\mathfrak h}
\newcommand{\gl}{\mathfrak{gl}}
\newcommand{\so}{\mathfrak{so}}
\newcommand{\sll}{\mathfrak{sl}}
\newcommand{\Span}[1]{\left\langle#1\right\rangle}
\DeclareMathOperator{\tr}{tr}
\DeclareMathOperator{\Lie}{Lie}
\DeclareMathOperator{\SO}{\mathsf{SO}}
\DeclareMathOperator{\OOO}{\mathsf{O}}
\DeclareMathOperator{\CO}{\mathsf{CO}}
\DeclareMathOperator{\GL}{\mathsf{GL}}
\DeclareMathOperator{\PGL}{\mathsf{PGL}}
\DeclareMathOperator{\SE}{\mathsf{SE}}
\DeclareMathOperator{\Eu}{\mathsf{E}}
\DeclareMathOperator{\SL}{\mathsf{SL}}
\DeclareMathOperator{\Gr}{Gr}
\DeclareMathOperator{\Aut}{Aut}
\DeclareMathOperator{\Aff}{\mathsf{Aff}}
\DeclareMathOperator{\Hess}{hess}
\DeclareMathOperator{\Stab}{Stab}
\newcommand{\E}{\mathcal{E}}
\newcommand{\CC}{\mathcal{C}}
\newcommand{\nuovoW}{\mathcal{W}}
\newcommand{\TT}{\mathcal{T}}
\newcommand{\NN}{\mathcal{N}}
\newcommand{\RR}{\mathcal{R}}
\newcommand{\SSSS}{\mathcal{S}}
\newcommand{\Th}{^\textrm{th}}
\newcommand{\Rd}{^\textrm{rd}}
\newcommand{\Nd}{^\textrm{nd}}
  \newcommand{\ttt}{\boldsymbol{t}}
    \newcommand{\w}{\boldsymbol{w}}
\renewcommand{\nuovoW}{\mathcal{W}}
\begin{document}

\title[Projectively and affinely invariant PDEs]{Projectively and affinely invariant PDEs on hypersurfaces 
}

 \author{Dmitri~Alekseevsky}
   \address{Institute for Information Transmission Problems, B. Karetny
per. 19, 127051, Moscow (Russia) and University of Hradec Kralove, Rokitanskeho 62,
Hradec Kralove 50003 (Czech Republic).}
\email{dalekseevsky@iitp.ru}

 \author{Gianni Manno}
   \address{Dipartimento di Matematica ``G. L. Lagrange'', Politecnico di Torino, Corso Duca degli Abruzzi, 24, 10129 Torino, Italy.}
    \email{giovanni.manno@polito.it}
 \author{Giovanni Moreno}
 \address{Department of Mathematical Methods in Physics,
 Faculty of Physics, University of Warsaw,
ul. Pasteura 5, 02-093 Warszawa, Poland}
 \email{giovanni.moreno@fuw.edu.pl}
\date{\today}
\maketitle

\begin{abstract}
In \cite{alekseevsky2020general} the authors have developed a method for constructing $G$--invariant PDEs imposed on hypersurfaces  of an $(n+1)$--dimensional homogeneous space $G/H$, under mild assumptions on the Lie group $G$. In the present paper the method is applied to the case when $G=\PGL(n+1)$ (resp., $G=\Aff(n+1)$) and the homogeneous space $G/H$ is the $(n+1)$--dimensional projective  $\p^{n+1}$ (resp., affine $\AAA^{n+1}$) space, respectively. The paper's main result is that  projectively or affinely invariant PDEs with $n$ independent and one unknown variables are in one--to--one correspondence with invariant hypersurfaces of the space of \textit{trace--free cubic forms} in $n$ variables with respect the group $\CO(d,n-d)$ of conformal transformations of $\R^{d,n-d}$. Local descriptions are also provided.
\end{abstract}

\medskip

\textbf{MSC 2020:} 53C30, 58J70, 35A30, 58A20 

\setcounter{tocdepth}{1}
\tableofcontents

\section*{Introduction}
In this paper we go on constructing $G$--invariant PDEs in one unknown variable defined on $(n+1)$--dimensional $G$--homogeneous manifolds, following the general theoretical scheme developed by the authors in   \cite{alekseevsky2020general}: there the cases when $G$ is either the Euclidean  $\SE(n+1)$, or the conformal group $\CO(n+1)$, were  treated: here, we will deal with the cases when $G$ is either the projective $\PGL(n+1)$, or the affine group $\Aff(n+1)$.\par
%
%
The reason why we treat these last two cases together in a separate paper is that, unlike the two before, they give rise to \emph{third--order} invariant PDEs; in particular, this casts an important bridge with the differential geometry of affine hypersurfaces and, in particular, with the Fubini--Pick invariant. On this concern, see, e.g., \cite[Section 2.2]{An-Min:2015aa}, \cite[Section 1]{doi:10.1002/cpa.3160390606}, \cite[Section 3.5]{MR2003610},   \cite{3rdOderAffPDE_preprint}, as well as the original work of Blaschke \cite{wilhelmblaschke1923}.\par
%
%
%
The vanishing of this invariant defines a $G$--invariant $3\Rd$ order PDE that can be constructed,  according to the general scheme developed in \cite{alekseevsky2020general}, by a suitable choice of a \textit{fiducial hypersurface} of order 3. In view of the tight relationship between the affine and the projective case (see also \cite{MR2406036}), we will state a  result concerning both   in Section \ref{sec.gianni}; technical computations concerning the projective case, that is when  $\mathbb{P}^{n+1}$ is regarded as homogeneous space of $\SL(n+2)$, will be carried out in   Section \ref{secCasPROJ}. Analogous computations for   the affine case, that is when  $\mathbb{A}^{n+1}$ is regarded as homogeneous space of the affine group $\Aff(n+1)$, which  can be   thought of  as a ``restriction'' of the projective case,   will be carried out   in Section \ref{secCasAff}.\par
%


The main result  is   Theorem  \ref{corCasoAff}: each projectively or affinely invariant PDE imposed on hypersurfaces of the $(n+1)$--dimensional  projective or affine space is uniquely given by a $\CO(d,n-d)$--invariant hypersurface of the space of \textit{trace--free cubic forms} in $n$ variables, were symbol $\CO(d,n-d)$ denotes the Lie group of conformal transformations of the space $\R^n$, equipped with a metric of signature $(d,n-d)$.\par 
A local coordinate description for the $\Aff(n+1)$--case, which obviously works for the $\PGL(n+1)$--case as well,   will be given  in Section \ref{secCoordDescr}, whereas in the last Section \ref{subEsempioA3} we focus on the case $n=2$.



 
\subsection*{Notations and conventions} The symmetric product will be denoted by $\odot$, and the symmetric $\ell$--power of a vector space $V$ will be denoted by $S^\ell(V) $. If $f:M\to N$ is a differentiable map, then pull--back via $f$ of a bundle $\pi:E\to N$, is denoted by $f^*(E)$. Symbol $\R^\times$ denotes the multiplicative group of real numbers.

\subsection*{Acknowledgments}
G.~Manno gratefully acknowledges support by the project ``Connessioni
proiettive, equazioni di Monge-Amp\`ere e sistemi integrabili'' (INdAM),
``MIUR grant Dipartimenti di Eccellenza 2018-2022 \linebreak (\texttt{E11G18000350001})'', ``Finanziamento alla Ricerca'' \texttt{53\_RBA17MANGIO},  and
PRIN project 2017 ``Real and Complex Manifolds: Topology, Geometry and
holomorphic dynamics'' (code \texttt{2017JZ2SW5}).  G.~Manno is a member of\linebreak GNSAGA
of INdAM.  G.~Moreno is supported by the Polish National Science Centre grant under the contract number \texttt{2016/22/M/ST1/00542}, as well as by the National Science Center  project “Complex contact
manifolds and geometry of secants”, \texttt{2017/26/E/ST1/00231}. The authors thank the anonymous referee for keen comments and useful suggestions.

\section{A  general construction  of  $G$--invariant  PDEs     on  a  homogeneous  manifold  $M = G/H$}\label{secRipArtPrec}

We will review here, without proofs, the main definitions and results, as well as all the necessary preliminary material, contained in \cite[Sections~2 and~3]{alekseevsky2020general}. Throughout this section  $M=G/H$ will be an $(n+1)$--dimensional homogeneous manifold
and  $S \subset M$ an embedded  hypersurface of $M$; in   Section \ref{secCasPROJ} and  Section \ref{secCasAff}, $M$ will be either   the projective space $\p^{n+1}$, or the affine space $\AAA^{n+1}$, respectively. 
 
 \subsection{Preliminary definitions}\label{sec:preliminaries}

Locally,  in an appropriate  local chart
\begin{equation}\label{eqn:coord.u.x}
(u,\x)=(u,x^1, \ldots, x^n)
\end{equation}
of $M$, the hypersurface $S$  can  be  described  by an  equation   $u = f(\x) =  f(x^1, \dots, x^n)$, where  $f$~is  a smooth function    of the variables $x^1, \dots, x^n$, that we refer to as the \emph{independent} variables, to   distinguish them  from the remaining   coordinate $u$,  that is the \emph{dependent} one.\footnote{A reader 
who is familiar with the standard literature about jet spaces may have noticed that we reversed the order of $\x$ and $u$: this choice will be more convenient for us as the coordinate $u$ will play the role of the ``$0^{\textrm{th}}$ coordinate''.
}
 We say that such  a chart is  \emph{admissible} for $S$ or, equivalently, that the hypersurface $S$ is (locally) admissible for the chart $(u,\x)$.
We denote by $S_f=S$ the graph of $f$:
\begin{equation*}
S_f:=\{\big(f(\x)\,,\x\big)\}=\{u=f(\x)\}\, .
\end{equation*}
%

Given two hypersurfaces $S_1$ and $S_2$ through a common point $\pp $, one can always choose a chart $(u,\x)$ about $\pp$ that is admissible for both: $S_1=S_{f_1}$, $S_2=S_{f_2}$. 

\begin{definition}\label{defDmitriGetti}
Two hypersurfaces $S_{f_1}, \, S_{f_2}$ passing through a common point $\pp = (u,\x)$ are called \emph{$\ell$--equivalent} at $\pp$  if  the Taylor expansions of $f_1$ and $f_2$, in a chart admissible for both, coincide at $\x$ up to order $\ell$.  The  class  of $\ell$--equivalent hypersurfaces to a given hypersurface $S$ at the point $\pp$ is denoted by $[S]^{\ell}_{\pp}$, and the union
\begin{equation*}
J^{\ell}(n,M):=\bigcup_{\pp\in M}\{[S]^{\ell}_{\pp}\mid \text{$S$ is a hypersurface of $M$ passing through $\pp$}\}
\end{equation*}
of all these equivalence classes is the \emph{space of $\ell$--jets of hypersurfaces}   of $M$.
\end{definition}
Note that $J^1(n,M)=\p T^*M$, that is the Grassmanian bundle $\Gr_n(TM)$ of tangent $n$--planes to the $(n+1)$--dimensional manifold $M$. From now on, when there is no risk of confusion, we let
$$
J^{\ell}:=J^{\ell}(n,M)\,.
$$
The natural projections
\begin{equation*}
\pi_{\ell,m}:J^{\ell}\stackrel{}{\longrightarrow} J^{m}\,,\quad [S]^{\ell}_{\pp}\longmapsto [S]^m_{\pp}\, ,\quad \ell>m\, ,
\end{equation*}
define a tower of bundles
$$
\dots\longrightarrow\ J^{\ell}\longrightarrow J^{\ell-1}\longrightarrow\dots\longrightarrow J^1=\mathbb{P}T^*M\longrightarrow J^0=M\,.
$$
It is well known that $\pi_{\ell,\ell-1}$ are affine bundle for $\ell\geq 2$.
For any $a^m\in J^{m}$,  the fiber of $\pi_{\ell,m}$ over $a^m$ will be denoted  by the symbol
$$
J^{\ell}_{a^m}:=\pi_{\ell,m}^{-1}(a^m)\,.
$$
\begin{definition}
A  \emph{system}  of  $m$ PDEs of order $k$ is  an $m$--codimensional submanifold   $\E  \subset  J^k$. A  \emph{solution}  of    the  system   $\E$  is  a  hypersurface   $S \subset M$    such  that $S^{(k)} \subset \E$.
 \end{definition}
\subsection{Assumptions on the Lie group $G$}
Before introducing the conditions   the Lie group $G$ will have to fulfill (see    Section~\ref{sec:descriptions} below) in order to make  Theorem~\ref{thMAIN1} work, we recall some  basic facts about the affine group that will help understand the meaning of these conditions.
\subsubsection{Affine groups and their subgroups of affine type}
Let $V$ be a vector space, treated as an affine space: then the  group $\Aff(V)$ of    affine transformations of $V$  fits into the short exact sequence of groups:
\begin{equation}
  0\longrightarrow  V\stackrel{T}{\longrightarrow}\Aff(V)\stackrel{L}{\longrightarrow}\GL(V)
   \longrightarrow 0\label{eqShExSeq}
\end{equation}
The monomorphism $T$ maps a vector $v\in V$ into the corresponding parallel translation $T_v$: one has then a canonical   normal subgroup $T_V$, made of parallel   translations, which acts  on  $V$ in a simply transitive way.\par
The action of $T_V$   defines  even an  absolute parallelism on $V$, i.e., it allows to  canonically identify the tangent  space  $T_vV$ at an arbitrary point $v\in V$ of the \emph{affine} space $V$, with the \emph{vector}  space  $V$: in particular, if   an origin $o\in V$ is chosen, then the differential 
\begin{equation*}
    L(g):=d_og:T_oV\longrightarrow T_{g\cdot o}V 
\end{equation*}
of  $g\in\Aff(V)$ at $o$ can be regarded as an isomorphism of $V$, that is as an element of $\GL(V)$. This explains the rightmost arrow of \eqref{eqShExSeq} and allows to   regard $\GL(V)$ as the \emph{linear group of the affine group}, that is, as the subgroup   $\Aff(V)_o=\GL(T_oV)$ of the group $\Aff(V)$ that stabilizes the origin  $o\in V$: this leads to the 
   semidirect  decomposition 
   \begin{equation}\label{eqDecomposizioneDmitresca}
       \Aff(V) = T_V \rtimes    \GL(V) 
   \end{equation}
   of the  affine group $\Aff(V)$, associated with the origin $o\in V$.\par
   If now a subgroup   $H \subset\Aff(V)$ is given, decomposition \eqref{eqDecomposizioneDmitresca} does not need to descend to $H$, in the sense that the sequence
\begin{equation}
  0\longrightarrow  T_W:=T^{-1}(H)\stackrel{T}{\longrightarrow}H\stackrel{L}{\longrightarrow}L_H:=L(H)
   \longrightarrow 0\label{eqShExSeqRESTR}
\end{equation}
   may be still exact, but not split.  
   This remark motivates the following definition.
   \begin{definition}\label{defSubAffType}
  We say that a subgroup $H \subset\Aff(V)$ is  of   \emph{affine type}   if $H$  admits  a semidirect decomposition
\begin{equation}\label{eqSubAffType}
    H = T_W \rtimes L_H
\end{equation}
   for  some  $o \in V$. The subgroup $L_H=H_o$ is called the \emph{linear subgroup} of $H$, whereas $T_W$ is its \emph{subgroup of translations}.
   \end{definition}
In condition (A2) below we shall require that  $\tau(H^{(k-1)})$ be a subgroup of affine type; 
indeed, as a direct consequence of Definition~\ref{defSubAffType},  if $H$ is subgroup of affine type of $\Aff(V)$, then   the    orbit $H\cdot o $ of $o$  coincides with the  affine subspace $W$ of $V$.\par 
     Let $H =   T_W  \rtimes L_H $ be  a  subgroup of  affine type, where $ L_H=H_o$  denotes its   linear   subgroup, and let us fix a  complementaty subspace $U$  to $W$ in $V$: then any  $h \in  H$  can be  decomposed into a product 
\begin{equation}\label{eqn:h.T.wh}
h =T_{ h}\cdot   L_h  \, .
\end{equation}
Therefore, in terms  of the  decomposition $V = U + W$, 
    the  action  of the linear  part $L_h$  takes the form
\begin{equation}\label{eqn:Lh}
L_h=
\left(
\begin{array}{cc}
* & *
\\
0 & \overline{L}_h
\end{array}
\right)\,.
\end{equation}
We let $\overline{L}_H:=\{\overline{L}_h\mid h\in H\}$.
\begin{lemma}\label{lemma.aff.banale}
Let $ H \subset \Aff(V)$ be a subgroup of  affine type. Then there exists a 1–1 correspondence between\linebreak  $\overline{L}_H$--invariant
hypersurfaces $ \overline{\Sigma} \subset  \overline{V} = V /W $ and (cylindrical) $H$–invariant hypersurfaces  ${\Sigma} = W + \overline{\Sigma}$ in $V$.
\end{lemma}

\begin{proof}
Let $\pi : V \to \overline{V} = V/W$ be the projection. Then  if $\overline{\Sigma} $ is  an $L_{H}$--invariant hypersureface in $\overline{V}$, then  $\Sigma := \pi^{-1}(\bar{ \Sigma})$ is  an $H$--invariant hypersurface in $V$, see also \cite[Lemma~3.1]{alekseevsky2020general}.
\end{proof}

\subsubsection{$k$--admissible homogeneous manifolds}\label{sec:descriptions}
In what follows, unless otherwise specified, $o$ is a fixed point of  $M=G/H$ (an ``origin''), so that $M=G\cdot o$, and $o^{\ell}$ is a point of $J^\ell$ projecting onto $o$.  This allows us to consider, $\forall\,\ell\geq 2$, the fibre $J^{\ell}_{o^{\ell-1}}$ as   a vector  space   with  the  origin  $o^{\ell}$ playing the role of zero vector.
The  group  $G$    acts naturally on each  $\ell$--jet space $J^{\ell}$:
\begin{eqnarray*}
   g :    J^{\ell}   &\longrightarrow&  J^{\ell}\, ,\\
   o^{\ell}= [S]^{\ell}_{o}  &\longrightarrow& g\cdot o^\ell:=[g(S)]^{\ell}_{g(o)}\,  , 
\end{eqnarray*}
with $o\in S$, for all   $g\in G$.
\begin{definition}
  The system $\E$ is called   \emph{$G$--invariant} if  $G \cdot\E = \E$.
\end{definition}
We denote by $H^{(\ell)}$ the stability subgroup $G_{o^{\ell}}$ in $G$ of the point $o^{\ell}$:
$$
H^{(\ell)}:=G_{o^{\ell}}\,.
$$
%
We are going to   assume that there exists a point $o^k\in J^k$, with $k\geq 2$, such that:\par\medskip
\noindent \textbf{(A1)} the orbit
\begin{equation*}
\check{J}^{k-1} := G\cdot o^{k-1} = G/H^{(k-1)} \subset  J^{k-1}
\end{equation*}
through the projection $o^{k-1}\in J^{k-1}$ of $o^k$ is open;\par\medskip
\noindent \textbf{(A2)} the orbit
\begin{equation}\label{eqn:Wk.orbit}
W^k:=\tau(H^{(k-1)})\cdot o^{k}\subset J^k_{o^{k-1}}
\end{equation}
of the natural affine action
\begin{equation}\label{eqn:tau.tauk.Dmitri}
\tau :  H^{(k-1)} \to \Aff(J^k_{o^{k-1}})
\end{equation}
in the  fibre  $J^k_{o^{k-1}}$ is an affine subspace and the group $\tau(H^{(k-1)})$ is a subgroup of affine type, i.e., 
\begin{equation}\label{eqn:decomp.tau.H}
\tau(H^{(k-1)})=T_{W^k}\rtimes L_{H^{(k-1)}}\, ,
\end{equation}
where $L_{H^{(k-1)}}$ is the stabilizer of $o^k$, see Definition~\ref{defSubAffType}.\par
Assumption (A2) implies that  there is a point $o^{k} \in J^k_{o^{k-1}}$   such  that   the    restriction    of   the    affine  bundle   $\pi_{k,k-1} : J^k\to J^{k-1}$ to  the orbit   $G\cdot o^{k}$   is  an  affine  subbundle  of  $\pi_{k,k-1}$ (over the base $\check{J}^{k-1}$).

\begin{definition}\label{def.Dmitri}
A  homogeneous  manifold   $M = G/H$  is called \emph{$k$--admissible}  for  $k \geq 2$ if assumptions (A1) and (A2) are satisfied.
\end{definition}

The problem of classifying all  $G$--invariant PDEs  $\E\subset J^k$ on a given $(n+1)$--dimensional manifold $M$ acted upon by a Lie group $G$ will be made more workable  by assuming $M$
to be a $G$--homogeneous manifold of a particular kind, namely a $k$--admissible one.%
%
%
%

\subsection{Natural bundles   on jet spaces}\label{SecMainRes}

\subsubsection{The lift of  hypersurfaces of $M$ to $J^\ell$}\label{subJetsSubs}

The space $J^{\ell}$ has a natural structure of smooth manifold: one way to see this is to extend the   local coordinate system \eqref{eqn:coord.u.x} on $M$
to  a  coordinate system
\begin{equation}\label{eqn:jet.coordinates}
(u,\x,\ldots,u_i,\ldots, u_{ij}, \ldots, u_{i_1 \cdots i_l},\ldots) = (u,x^1,\dots,x^n,\ldots,u_i,\ldots, u_{ij}, \ldots, u_{i_1 \cdots i_l},\ldots)
\end{equation}
on $J^{\ell}$, where each coordinate function\footnote{The $u_{i_1 \cdots i_k}$'s are symmetric in the lower indices.} $u_{i_1 \cdots i_k}$, with $k\leq \ell$,  is unambiguously defined by 
\begin{equation}\label{eqDefHigCoordJets}
u_{i_1 \cdots i_k}\left([S_f]^{\ell}_{\pp}\right)= \partial^k_{i_1 \cdots i_k}f(\x)\, ,\quad \pp=(u,\x)\,,\quad k\leq \ell\,.
\end{equation}
In formula \eqref{eqDefHigCoordJets} above the symbol $\partial_i$ denotes the partial derivative $\partial_{x^i}$, for  $i= 1,  \ldots, n$; we recall that the hypersurface $S=S_f$ is the graph of the function $u=f(\x)$ and, as such, it is   admissible for the chart $(u,\x)$.\par
%
The $\ell$--lift of $S$ is defined by
\begin{equation*}
S^{(\ell)}:=\{[S]^{\ell}_{\pp}\,\,|\,\,\pp\in S\}\,.
\end{equation*}
It is an $n$--dimensional submanifold of $J^{\ell}$. If $S=S_f$ is the graph of  $u=f(\x)$, then $S_f^{(\ell)}$ can be naturally parametrized as follows:\footnote{We stress once again that a switch has occurred between the first and the second entry, with respect to a more standard literature.}
\begin{equation*}
\left(u=f(\x),\x,\dots u_i=\frac{\partial f}{\partial x^i}(\x) ,\dots  u_{ij}=\frac{\partial^2 f}{\partial x^i\partial x^j}(\x) , \dots \right)\,.
\end{equation*}
%

\subsubsection{The tautological bundle and the higher order contact distribution on $J^{\ell}$}
\begin{lemma}\label{lemma.Dmitri}
Any point $a^{\ell}=[S]^{\ell}_{\pp}\in J^{\ell}$ canonically defines the $n$--dimensional subspace
\begin{equation}\label{eq:Dmitri.1}
T_{a^{\ell-1}}S^{(\ell-1)}\subset T_{a^{\ell-1}}J^{\ell-1}\,,\quad a^{\ell-1}:=\pi_{\ell,\ell-1}(a^{\ell})\,.
\end{equation}
\end{lemma}
\begin{definition}\label{defTautBundle}
The \emph{tautological} rank--$n$ vector bundle $\TT^{\ell} \subset \pi_{\ell,\ell-1}^* (TJ^{\ell-1})$   is the bundle over $J^{\ell}$ whose fiber over the point $a^{\ell}$ is given by  \eqref{eq:Dmitri.1}, i.e.,
\begin{equation*}
\TT^{\ell}=\left\{ (a^{\ell},v)\in J^{\ell}\times TJ^{\ell-1}\,\,|\,\,v\in  T_{a^{\ell-1}}S^{(\ell-1)}\right\} \,.
\end{equation*}
\end{definition}
The (truncated) total derivatives
\begin{equation}\label{eqn:total.derivatives}
D_i^{(\ell)}:=\partial_{x^i}+\sum_{k=1}^{\ell}\sum_{j_1\leq\dots\leq j_{k-1}}u_{j_1\dots j_{k-1}\,i}\,\partial_{u_{j_1\dots j_{k-1}}}\,,\quad i=1\dots n\, ,
\end{equation}
constitute a local basis of the bundle $\TT^{\ell}$.\par

%
The pre--image $\CC^{\ell}:=(d\pi_{\ell,\ell-1})^{-1}\TT^{\ell}$ of the tautological bundle on $J^{\ell}$, via the differential $d\pi_{\ell,\ell-1}$ of the canonical projection $\pi_{\ell,\ell-1}$, is a distribution on $J^{\ell}$.
\begin{definition}
 $\CC^{\ell}$ is called the \emph{$\ell\Th$ order contact structure} or the  \emph{Cartan distribution} (on $J^{\ell}$).
\end{definition}
We will need also the \emph{vertical subbundle} $T^vJ^{\ell}:=\ker (d\pi_{\ell,\ell-1})$ of $TJ^{\ell}$. The distribution 
%
$\mathcal{C}^{\ell}$ has been called the ``higher order contact structure'' \cite{KrasilshchikLychaginVinogradov:GJSNPDEq,MR2352610,MR722524}  because, for $\ell=1$,   if $(u,x^i,u_i)$ is a chart on $J^1$, then $\mathcal{C}:=\mathcal{C}^1=\ker(\theta)$, where  
$d\theta=du-u_idx^i$,  
is the  contact distribution.  

\subsubsection{The affine structure of the  bundles   $J^{\ell}\to J^{\ell-1}$ for $\ell\geq 2$}\label{secSezioneDedicataADmitri}

According to   Definition \ref{defTautBundle}, the tautological bundle $\TT:=\TT^1$ is the vector bundle over $J^1$ defined by
\begin{equation*}
\TT_{[S]_{\pp}^1}:=\TT^1_{[S]_{\pp}^1}=T_{\pp}S\,  .
\end{equation*}
\begin{definition}
The \emph{normal bundle} $\NN$ is the  line bundle 
\begin{equation*}
\NN_{[S]_{\pp}^1}:=N_{\pp}S= T_{\pp}M\big/T_{\pp}S\,
\end{equation*}
over   $J^1$. 
\end{definition}
\begin{remark}\label{remarkCheEraPiuLungo}
To simplify notations, we denote by $\partial_u$ the equivalence class $\partial_u\,\mathrm{mod}\,\TT$.
\end{remark}

Lemma \ref{lemma.ker.vector} and Proposition \ref{prop:affine.bundles} below are both   well known (see for instance \cite{KrasilshchikLychaginVinogradov:GJSNPDEq,MR989588}).
%
\begin{lemma}\label{lemma.ker.vector}
For $\ell\geq 1$, the following vector bundle isomorphism holds: 
\begin{equation*}
T^vJ^{\ell} \simeq \pi_{\ell,1}^* (S^{\ell}\TT^* \otimes \NN)\, .
\end{equation*}
\end{lemma}
%
%
\begin{proposition}\label{prop:affine.bundles}
For $\ell\geq 2$, the bundles $J^{\ell}\to J^{\ell-1}$ are affine bundles modeled by the vector bundles $\pi_{\ell-1,1}^* (S^{\ell}\TT^* \otimes \NN)$. In particular, once  a  chart $(u,\x)$ has been fixed, a choice of a point $[S]^{\ell}_{\pp}$ (the origin) defines an identification of  $J^{\ell}_{[S]^{\ell-1}_{\pp}}$ with $S^{\ell}T^*_{\pp} S$.
\end{proposition}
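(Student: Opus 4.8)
The plan is to prove the proposition by a direct coordinate computation of how the jet coordinates $u_{i_1\cdots i_l}$ transform under a change of admissible chart that fixes the point $\pp$ and the $(l-1)$--jet $[S]^{l-1}_{\pp}$, and then to reinterpret the result invariantly using Lemma \ref{lemma.ker.vector}. First I would fix a point $\pp\in M$, a hypersurface $S=S_f$ through $\pp$ given by $u=f(\x)$ in an admissible chart $(u,\x)$, and consider another admissible chart $(\tilde u,\tilde\x)$ with the same origin $\pp$. Writing the coordinate change and differentiating the implicit description of $S$, one finds that the top--order jet coordinates $\tilde u_{i_1\cdots i_l}$ are equal to $u_{i_1\cdots i_l}$ plus a correction term that depends only on the coordinate change and on the lower--order jet coordinates $u_{j_1\cdots j_m}$ with $m\le l-1$ (this is the elementary but slightly tedious Fa\`a di Bruno bookkeeping that I would not carry out in full). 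The crucial structural fact that I would extract is: once $[S]^{l-1}_{\pp}$ is held fixed, i.e.\ once all the jet coordinates of order $\le l-1$ are frozen, the transition map on the fibre $J^l_{[S]^{l-1}_{\pp}}$ is \emph{affine} in the remaining coordinates $u_{i_1\cdots i_l}$, and its linear part is precisely the natural action on $S^l T^*_{\pp}S$ induced by the change of basis on $T^*_{\pp}S$ (coming from the change of the independent variables $\x\mapsto\tilde\x$ restricted to the tangent hyperplane $T_{\pp}S$).

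Next I would assemble these local statements into the global claim. Since the transition maps between fibres of $\pi_{l,l-1}$ are affine, $J^l\to J^{l-1}$ is an affine bundle; its underlying vector bundle is, fibrewise over $[S]^{l-1}_{\pp}$, the space of differences of $l$--jets, which by the computation above is canonically $S^l T^*_{\pp}S$ — and $T_{\pp}S=\TT_{[S]^1_{\pp}}$ by definition of the tautological bundle, so the modeling bundle is $\pi_{l-1,1}^*(S^l\TT^*)$. To get the factor $\NN$ I would note that the identification of the vertical directions $\partial/\partial u_{i_1\cdots i_l}$ with $dx^{i_1}\odot\cdots\odot dx^{i_l}\otimes\partial_u$ is exactly the content of Lemma \ref{lemma.ker.vector}, whose $l=1$ case is proved in the excerpt and whose general case is asserted there to go through identically; the appearance of $\partial_u$, i.e.\ the generator of $\NN$, encodes precisely how the correction term scales under a change of the dependent variable $u$ (which rescales second and higher derivatives of $f$). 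Thus $T^v J^l\cong\pi_{l,1}^*(S^l\TT^*\otimes\NN)$ is the vector bundle modeling the affine bundle $\pi_{l,l-1}$, and this is the coordinate--free reason the correction term lives in $S^l\TT^*\otimes\NN$ rather than just in $S^l\TT^*$.

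For the final sentence of the proposition — that after fixing a chart $(u,\x)$ the fibre $J^l_{[S]^{l-1}_{\pp}}$ is identified with $S^l T^*_{\pp}S$ — I would simply observe that fixing the chart trivializes $\NN$ via the class of $\partial_u$ and trivializes the affine structure by choosing the origin to be the jet of the graph of the degree--$(l-1)$ Taylor polynomial of $f$ (equivalently, the unique point in the fibre all of whose $u_{i_1\cdots i_l}$ vanish in that chart), so the affine space becomes the vector space $S^l\langle dx^1|_{\pp},\dots,dx^n|_{\pp}\rangle=S^lT^*_{\pp}S$. The main obstacle, and the only real content, is the first step: verifying carefully that the correction term in the transformation law of $u_{i_1\cdots i_l}$ genuinely depends only on lower--order data (so that the fibrewise map is affine, not merely smooth) and that its linear part is the standard tensorial action — in other words, that there is no $O((u_{**})^2)$ contamination at top order. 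This is a finite computation with the chain rule, and the absence of higher--order self--interaction is ultimately because passing from $f$ to $\tilde f$ at a fixed point, modulo lower jets, amounts to an affine change of the fibre coordinate; I would present this as a short lemma on the transformation of $l$--jets under change of coordinates and then read off the proposition, which is why the excerpt is content to call the statement ``well known'' and omit the details.
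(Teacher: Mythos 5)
Your proof is correct and is essentially the paper's own (largely omitted) argument: the paper likewise treats the affine structure as a well-known coordinate fact, deferring the Fa\`a di Bruno bookkeeping to the cited literature, and records only the fibrewise action $v=v_{i_1\cdots i_l}\,dx^{i_1}\odot\cdots\odot dx^{i_l}\otimes \partial_u\colon [S_f]^l_{\pp}\mapsto [S_g]^l_{\pp}$ with $g=f+\tfrac{1}{l!}v_{i_1\cdots i_l}x^{i_1}\cdots x^{i_l}$, which is exactly the translation structure, twisted by $S^l\TT^*\otimes\NN$ via Lemma \ref{lemma.ker.vector}, that you identify. The chart--change verification you outline (affine transition maps on the fibres, tensorial linear part, no top--order self--interaction) is precisely the step the paper delegates to its reference, so your sketch fills in, rather than deviates from, the paper's approach.
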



\subsection{Constructing $G$--invariant   PDEs $\E$} \label{secConstructingGinvPDEs}

Let $M = G/H  =  G\cdot o$, $o\in M$, be  an  $(n+1)$--dimensional  homogeneous manifold and recall  (see Section \ref{sec:descriptions}) that $G$ acts on each  jet space $J^\ell=J^{\ell}(n,M)$.  
%
%
%
To further simplify the setting, we will assume that $M=G/H$ possess a \emph{fiducial hypersurface} of order $k$, defined below.
\subsubsection{The fiducial hypersurface} 
\begin{definition}\label{defFidHyp}
Let  $S \subset  M$ be a hypersurface,  such that $S\ni o$. The hypersurface $S$ is called a a \emph{fiducial hypersurface} (of order $k$), if $S$  is homogeneous  with respect to a  subgroup  of $G$, such that (A1) and (A2) of Section \ref{sec:descriptions} are satisfied with   $o^k:=[S]^k_o$.
\end{definition}
Plainly, if $M=G/H$ admits  a {fiducial hypersurface} of order $k$, then it is $k$--admissible as well (see Definition \ref{def.Dmitri}).  
Let $S$ be a fiducial hypersurface of order $k$ in the sense of Definition \ref{defFidHyp}:  therefore, for any $\ell\leq k$, we will regard the point 
$$
o^{\ell}:= [S]^{\ell}_o \in J^{\ell}
$$
  as the origin of $J^{\ell}$. Furthermore,  the identification
$$
J^{\ell}_{o^{\ell-1}}= S^{\ell}(T_{o}^*S)\otimes N_oS\, ,
$$
in the case when the  fiducial hypersurface $S$ is the graph $S_f$ of a $f$,   reads
 (see   Proposition \ref{prop:affine.bundles}):
\begin{equation}\label{eqn:identification}
J^{\ell}_{o^{\ell-1}}= S^{\ell}(T_{o}^*S_f)\,.
\end{equation}
We will  use  this identification in the sequel.

\subsubsection{A general method for constructing $G$--invariant PDEs}
We   apply now Lemma \ref{lemma.aff.banale} to the   subgroup $\tau(H^{(k-1)})\subset \Aff(J^k_{o^{k-1}})$ of affine type, which eventually leads to  \cite[Theorem~3.1]{alekseevsky2020general}.
\begin{corollary}\label{cor.1.1.corresp}
Let $M=G/H$ be a $k$--admissible homogeneous manifold.
Then there exists a $1$--$1$ correspondence between $L_{H^{(k-1)}}$--invariant hypersurfaces  $\overline{\Sigma}\subset J^k_{o^{k-1}}/W^k$ and (cylindrical) $\tau(H^{(k-1)})$--invariant hypersurfaces $\Sigma=p^{-1}(\overline{\Sigma})\subset J^k_{o^{k-1}}$,  where
\begin{equation}\label{eqn:p.proj}
p: J^k_{o^{k-1}}\to J^k_{o^{k-1}}/W^k
\end{equation}
is the natural projection.
\end{corollary}
The aforementioned main result of \cite{alekseevsky2020general}, that is  Theorem~3.1, is a direct consequence of above Corollary \ref{cor.1.1.corresp} and below Lemma \ref{lemLemmaLemmino}, applied to the bundle $\pi_{k,k-1}$.

\begin{lemma}\label{lemLemmaLemmino}
Let  $\pi : P \longrightarrow B$  be  a  bundle. Assume that a Lie group $G$ of automorphisms  of  $\pi$,  such  that $B =  G/H$,  acts  transitively on  $B$,  where $H$ is  the  stabilizer of a point  $o \in B$.  Then:
\begin{itemize}
\item [i)] any $H$--invariant function $F$ on $P_o:=\pi^{-1}(o)$ extends to a $G$--invariant  function  $\widehat{F}$ on  $P$ (where  $\widehat{F}(gy) =F(y)$ for  $y \in  P_o$   and     $g \in  G$), and   $F\longmapsto\widehat{F}$ is a  bijection;
\item[ii)] any $H$--invariant   hypersurface   $\Sigma$  of  the  fiber  $P_o$ extends to  a $G$--invariant   hypersurface   $\mathcal{E}_{\Sigma} := G \cdot\Sigma$  of  $P$, and this   gives a bijection between $H$--invariant hypersurfaces of $P_o$ and $G$--invariant hypersurfaces of $P$.
\end{itemize}
\end{lemma}
\begin{proof}
See \cite[Lemma~3.2]{alekseevsky2020general}.
\end{proof}
%
%
%
\begin{theorem}\label{thMAIN1}
Let $M=G/H$ be a $k$--admissible homogeneous manifold (see Definition \ref{def.Dmitri}). Then there is  a natural  $1$--$1$   correspondence  between   $L_{H^{(k-1)}}$-- invariant hypersurfaces $\overline{\Sigma}$  (see also \eqref{eqn:decomp.tau.H}) of $J^k_{o^{k-1}}/W^k$
and  $G$--invariant  hypersurfaces  $\E_{\overline \Sigma}:= \E_{p^{-1}(\overline{\Sigma})}=   G \cdot p^{-1}(\overline{\Sigma})$  of  $J^k=J^k(n,M)$, where $p$ is the natural projection \eqref{eqn:p.proj}.
\end{theorem}
%
Above Theorem \ref{thMAIN1} closes the summary of the  theory developed by the authors in  \cite{alekseevsky2020general}, that is a    strategy  for    constructing     $G$--invariant  PDEs   imposed on  the hypersurfaces  of   a  $k$--admissible homogeneous manifold $M = G/H$:
%
%

\begin{enumerate}
\item    calculate   the orbit   $W^k = \tau(H^{(k-1)})\cdot o^{k} $   and decompose $\tau(H^{(k-1)})$ accordingly  to \eqref{eqn:decomp.tau.H};
\item   describe $L_{H^{(k-1)}}$--invariant  hypersurfaces $\overline{\Sigma} \subset {V}^k=J^{k}_{o^{k-1}}/W^k$;
\item   write  down the  $G$--invariant   equations   $\E_{\overline \Sigma}  =  G \cdot p^{-1}(\overline{\Sigma})$ in the  coordinates \eqref{eqn:jet.coordinates}.
\end{enumerate}

In  the   next  Section \ref{secCasPROJ}  we begin  implementing   this strategy  for the  projective space $\p^{n+1}$, whereas in Section \ref{secCasAff} we will be dealing with the affine space $\AAA^{n+1}$; the $G$--invariant PDE itself is obtained, in an unified manner, in the Section \ref{sec.gianni}.



\section{Stabilizers of the $\SL(n+2)$--action on    $J^\ell(n,\p^{n+1})$}\label{secCasPROJ}

We consider the linear space  $\nuovoW:=\R^{n+2}$  with the basis     
$$\{p, e_1,\ldots, e_n,q \}$$ 
and we  let $G=\SL(n+2)$ act naturally on it; therefore, $G$ acts on the projectivization $M:=\p \nuovoW$   of $\nuovoW$. The projective coordinates
\begin{equation*}
[u:x^1:\cdots: x^n:t]
\end{equation*}
on $\p \nuovoW=\p^{n+1}$ will be given by the dual  coordinates to the basis above.
We shall also need a
scalar product
\begin{equation}\label{eqScalProdErrEnnPROJ}
g=\langle \,\cdot\, , \,\cdot\,  \rangle
\end{equation}
on $ E:=\Span{e_1,\ldots,e_n}$, of  signature  $d,n-d$.
Let $\SSSS_g$ denote 
the  \emph{projective quadric}
\begin{equation}\label{eqSupFidPROJ}
\SSSS_g:=\p \nuovoW_0 \, ,
\end{equation}
where $\nuovoW_0$ is the null cone of the pseudo-euclidean metric  
\begin{equation}\label{eqn:gW}
 g_{\nuovoW}:=g-du\odot dt\, ,
\end{equation}
that is, $\nuovoW_0:=\{w\in\nuovoW\,|\,g_{\nuovoW}(w,w)=0\}$. 
%
In Section~\ref{sec.gianni}  we shall  prove that $\SSSS_g$ is a fiducial hypersurface, see Proposition~\ref{corExPropFidHypCasoAff}.\par 
The point 
$$
o:=[p]=[1:0:\cdots: 0:0]
$$ 
clearly belongs to the   hypersurface $\SSSS_g$, so that it makes sense to consider  
$$o^{(k)}:=[\SSSS_g]_o^k$$ for $k\geq 0\,.$ 
In particular,  the point 
$o^{(1)}=[\SSSS_g]^1_o$, 
that is the tangent space $T_o\SSSS_g=T_o(\SSSS_g\cap\mathcal{U})\in J^1$, in the affine coordinate neighborhood 
\begin{equation}\label{eqn:affine.neigh}
\mathcal{U}:=\{ [1:x^1:\dots:x^n:t] \}
\end{equation}
can be identified with $E=\ker d_ot$: indeed,  $\ker d_o( tu-g)=\ker (d_ot- d_og)=\ker d_ot=E$, because $d_og=0$.

\begin{lemma}\label{lemStrutturaSottogruppiCasoPROJ}
  The stabilising subgroups corresponding to the origins $o^{(k)}$, for $k=0,1,2,3$, are:
\begin{eqnarray*}
H &=&\R^{n+1}\rtimes\GL(n+1)\, ,
\\
H^{(1)} &=& \R^{n+1}\rtimes((\R^n\rtimes\GL(n))\times\R^\times)\, ,
\\
H^{(2)} &=& (\R^{n+1}\rtimes(\R^n\rtimes\OOO(d,n-d))\times\R^\times) \, ,
\\
H^{(3)} &=&  \R^{n+1}\rtimes(\OOO(d,n-d) \times\R^\times) \, .
\end{eqnarray*}
\end{lemma}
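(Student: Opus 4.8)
The plan is to compute the four stabilisers one jet order at a time, combining the matrix realisation of $G=\SL(W)$ adapted to the basis $\{p,e_1,\dots,e_n,q\}$ with the affine structure of the tower $J^l\to J^{l-1}$ recorded in Proposition~\ref{prop:affine.bundles}. Throughout I work in the standard affine neighbourhood $\{u\neq 0\}$ of $o=[p]$, in which $\SSSS_g$ is \emph{exactly} the paraboloid $\{t=g(\x,\x)\}$ with vertex $o$; consequently $o^{(1)}=T_o\SSSS_g=E$, the jet $o^{(2)}$ is the Hessian $2g$, and $o^{(3)}=0$, the paraboloid carrying no cubic term.

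First I observe that $H=\Stab_G(o)$ is the stabiliser of the line $\R p\subset W$: in block form (blocks of sizes $1$ and $n+1$) it is the group of matrices $\bigl(\begin{smallmatrix}a&v^t\\0&B\end{smallmatrix}\bigr)$ with $a\det B=1$, whose unipotent radical is the abelian group of the $v$'s and a Levi factor is the group of the $B$'s, with $a=(\det B)^{-1}$ determined; hence $H\cong\R^{n+1}\rtimes\GL(n+1)$. Since $\SSSS_g$ is a smooth quadric, $T_o\SSSS_g$ is its projectivised polar hyperplane $\p(o^{\perp_{g_W}})$, and a one-line polarisation identifies $o^{\perp_{g_W}}$ with $\R p\oplus E$; so $o^{(1)}$ is the hyperplane $E\subset T_oM\cong W/\R p$ and $H^{(1)}=\Stab_H(o^{(1)})$ is the subgroup of $H$ that additionally preserves $\R p\oplus E$, i.e.\ with $BE\subseteq E$. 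Writing $B=\bigl(\begin{smallmatrix}C&w\\0&\delta\end{smallmatrix}\bigr)$ (blocks of sizes $n$ and $1$) on $E\oplus\R q$, one reads off $H^{(1)}\cong\R^{n+1}\rtimes\bigl((\R^n\rtimes\GL(n))\times\R^\times\bigr)$, the $\R^n$ being the $w$'s, $\GL(n)$ the $C$'s and $\R^\times$ the $\delta$'s.

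For the remaining two stabilisers I bring in the affine structure. By Proposition~\ref{prop:affine.bundles} the fibre $J^2_{o^{(1)}}$ is affine over $S^2E^*\otimes N_o\SSSS_g$ with $N_o\SSSS_g=\R\bar q$; as $H^{(1)}$ preserves $\R p\oplus E$ it fixes the flat $2$-jet, hence acts \emph{linearly} on this fibre, the whole unipotent part ($v$'s and $w$'s) acting trivially, while the reductive part acts by $\xi\otimes\bar q\mapsto\delta\,(C^{-t}\xi\,C^{-1})\otimes\bar q$. Since $o^{(2)}$ is a nonzero multiple of $g\otimes\bar q$, the condition $h\cdot o^{(2)}=o^{(2)}$ reduces to $C^tgC=\delta g$, i.e.\ $C\in\CO(d,n-d)$ (the conformal group of $g$, with $\delta$ its conformal factor); hence $H^{(2)}\cong\bigl(\R^{n+1}\rtimes(\R^n\rtimes\OOO(d,n-d))\bigr)\times\R^\times$. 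Finally $J^3_{o^{(2)}}$ is affine over $S^3E^*\otimes N_o\SSSS_g$, with basepoint $o^{(3)}$ (the $3$-jet of the paraboloid); the reductive part of $H^{(2)}$ preserves the whole paraboloid and so fixes $o^{(3)}$, whereas the unipotent part now acts by a genuine translation, so $H^{(3)}=\Stab_{H^{(2)}}(o^{(3)})$ is cut out by the vanishing of this translation cocycle on $\R^{n+1}\oplus\R^n$. That cocycle is, up to scale, the symmetrisation $(v,w)\mapsto g\odot(v^\flat-2\,w^\flat)\in S^3E^*$; it has rank $n$, so its kernel is an $(n+1)$-dimensional abelian subgroup, giving $H^{(3)}\cong\R^{n+1}\rtimes(\OOO(d,n-d)\times\R^\times)$.

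The only non-formal input is the explicit evaluation of these translation cocycles on the unipotent part of $H^{(1)}$ — that the $J^2$-cocycle vanishes (so all of $\R^{n+1}\oplus\R^n$ survives into $H^{(2)}$) and that the $J^3$-cocycle has rank exactly $n$. I would obtain both by prolonging to $J^2$ and $J^3$ the one-parameter subgroups of $\SL(W)$ generated by the $v$'s and by the $w$'s: realising each such element as a projective transformation in the chart $\{u\neq 0\}$ and Taylor-expanding the pulled-back paraboloid at $o$ gives directly how its osculating quadratic and cubic forms move, the $J^3$-translation landing in the line $\R\,g\odot\ell$, $\ell\in E^*$ — the Fubini--Pick locus, which is $\RR_{o^{(2)}}$. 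As consistency checks, the identity $\dim H^{(k)}+\dim\bigl(G\cdot o^{(k)}\bigr)=\dim G=(n+1)(n+3)$ should hold at each level; in particular $\dim H^{(2)}+\dim J^2=\dim G$, which re-proves that $o^{(2)}$ has an open $G$-orbit in $J^2$, so that $\SSSS_g$ is genuinely a fiducial hypersurface of order $3$. One may also note, as an a priori lower bound, that $\Stab_G(\SSSS_g)$ preserves $\SSSS_g$ setwise, so its point stabiliser at $o$ — the conformal similarity group $\R^n\rtimes\CO(d,n-d)$ of the $n$-dimensional conformal quadric — is contained in every $H^{(k)}$.
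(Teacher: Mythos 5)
Your computations of $H$ and $H^{(1)}$ coincide with the paper's (stabiliser of the line $\R p$, then of the hyperplane $\R p\oplus E$, read off in block form), so the real comparison is at the jet levels, where your route genuinely differs. The paper transfers the whole fibre analysis to the affine model $\SSSS_g^{\textrm{aff}}=\{u=g(\x,\x)\}$, where the only relevant unipotent directions are the shears $\w$; it shows by a Taylor expansion that a shear fixes $o^{(2)}$ but translates $o^{(3)}$ by $-2g(\cdot,\w)\odot g$, and then quotes this in the projective proof. You instead work directly in the projective chart $\{u\neq 0\}$, and therefore must also handle the genuinely projective unipotent directions $v$ (the kernel of the isotropy representation), which the paper's proof treats as fixing all three jets when it concludes $H^{(3)}=T_oM\rtimes(\OOO(T_o\SSSS_g)\times\R^\times)$. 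On this point your version is the more accurate one: expanding the transformed paraboloid shows that a $v$ with nonzero $E$-component moves $o^{(3)}$ by a cubic term proportional to $\langle v,\x\rangle\, g(\x,\x)$, so the surviving $\R^{n+1}$ is not the factor $T_oM$ but precisely the kernel of your rank-$n$ map $(v,\w)\mapsto g\odot(v^{\flat}-2\w^{\flat})$, i.e.\ a diagonal combination of $v$'s and $\w$'s together with the $q$-direction. Your own closing remark forces this: the null rotations in the point stabiliser of $\OOO(g_W)$ preserve $\SSSS_g$, hence lie in every $H^{(k)}$, and they mix the two unipotent types. Thus the Lemma is correct as a statement of isomorphism types, and your argument, once written out, actually repairs the subgroup identification in the paper's proof. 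Two debts remain before this is a complete proof: (i) the deferred Taylor computations (triviality of the unipotent action on $J^2_{o^{(1)}}$, and the relative coefficients $1$ and $-2$ of the two $J^3$-translations) must be carried out explicitly --- the $\w$-half is exactly the computation the paper performs in the affine subsection, while the $v$-half is the new and decisive step; (ii) in the reductive action on $S^2E^{*}\otimes\R\bar q$ the weight is $a\delta$ with $a=(\det B)^{-1}$ imposed by the $\SL$-normalisation, not $\delta$ alone; this changes which pairs $(C,\delta)$ literally stabilise $o^{(2)}$, but not the resulting isomorphism type $\OOO(d,n-d)\times\R^\times$, so your conclusions are unaffected.
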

\begin{proof}
An element of  $G$ stabilizing  the line generated by $p$ is a $(n+2)\times(n+2)$  matrix with determinant one, displaying all zeros in the first column, save for the first entry, that has to be equal to the inverse of the determinant of the rightmost lower  $(n+1)\times(n+1)$ block: in other words,
 \begin{equation}\label{eqPrimaFormaAcca}
H=G_{[p]}=\Aff(E\oplus \R q )=\R^{n+1}\rtimes\GL(n+1)\, .
\end{equation}
The same can be seen on the infinitesimal level: passing to the Lie algebra $\g$ of $G$, we consider the decomposition
\begin{equation}\label{eqDecSLW}
\g=\sll(\nuovoW)=\so(\nuovoW)\oplus S^2_0(\nuovoW) \, ,
\end{equation}
where $\so(\nuovoW)=\so(d+1,n+1-d)$ is identified with the space of skew--symmetric forms $\Lambda^2 \nuovoW$ and $S^2_0(\nuovoW)$ denotes the space of trace--free symmetric forms with respect to $g_{\nuovoW}$, cf. \eqref{eqn:gW}; therefore, since   $\nuovoW$ splits into the sum
\begin{equation*}
\nuovoW=\R p\oplus ( E\oplus\R q)
\end{equation*}
of the  $(n+1)$--dimensional space $\R q\oplus E$ and the one--dimensional subspace $\R p$, we obtain   the decompositions
\begin{eqnarray}
\so(\nuovoW)&=&(\R p\wedge    ( E\oplus  \R q))\oplus\so( E\oplus \R q ) \, ,\label{eqDecSOW1}\\
S^2_0(\nuovoW)&= & \R p \odot  (E  \oplus \R p ) \oplus \R q \odot  (E  \oplus \R q ) \oplus S^2_0E \oplus \R(p \odot q - e_0 \otimes e_0) .\label{eqDecSOW2}
\end{eqnarray}
where $e_0 \in E$ is a suitable vector.
%
%
%
%
%
%
%
It is now easy to see that   the Lie algebra $\g_0=\gh$ of the stabilizer $H$ is given by 
\begin{equation}\label{eqAccaPiccoloProj}
\gh=\gl\,(E\oplus \R q) \,\,\,\oplus\,\,\, (\R p\otimes(E\oplus \R q)^*) =
\gl\,(E\oplus \R q) \,\,\,\oplus\,\,\, ((\R p)^*\otimes(E\oplus \R q))^*\, .
\end{equation}



Since
\begin{equation*}
T_oM=T_{[p]}\p \nuovoW=(\R p)^*\otimes\frac{\nuovoW}{\R p}\simeq (\R p)^*\otimes (   E\oplus \R q)\, ,
\end{equation*}
we obtain 
\begin{equation*}
\gh\simeq T_oM\oplus\gl(T_oM)\, .
\end{equation*}
%
%
The last identification allows to rewrite \eqref{eqPrimaFormaAcca} as follows:
 \begin{equation*}
H= T_oM\rtimes\GL(T_oM)\, ,
\end{equation*}
where the factor $\GL(T_oM)$ (resp., $T_oM$) is the image (resp., kernel) of the isotropy representation
\begin{equation}\label{eqIsotropyProj}
j:H\longrightarrow \GL(T_oM)\, .
\end{equation}
In light of what we have found it is easy to pass from \eqref{eqAccaPiccoloProj} to the Lie algebra $\gh^{(1)}$ of $H^{(1)}$
\begin{equation}\label{eqAccaUnoProj}
\gh^{(1)}=\gh_{o^{(1)}}=(\R p\otimes (E\oplus  \R q )^*)\oplus \gl(E\oplus  \R q)_{E}\, ,
\end{equation}
where
\begin{equation}\label{eqAccaUnoProjBIS}
\gl(E\oplus  \R q)_{E}= \gl(E)\oplus E\oplus  \R q  \, ,
\end{equation}
is the subalgebra preserving $E$.
On the level of Lie groups this means that
\begin{equation*}
H^{(1)}= (   E\oplus\R q)^*\rtimes ((E\rtimes\GL(E))\times (\R q)^\times)\simeq \R^{n+1}\rtimes ((\R^n\rtimes\GL(n))\times\R^\times)\, ,
\end{equation*}
or, more intrinsically,
\begin{equation}\label{eqAccaUnoProjGRUPPO}
H^{(1)}=T_oM\rtimes(\Aff(T_o\SSSS_g)\times\R^\times)\, .
\end{equation}
We shall show now that the subgroup of \eqref{eqAccaUnoProjGRUPPO} that stabilizes $o^{(2)}$ is precisely
\begin{equation}\label{eqAccaDueProj}
H^{(2)}=T_oM\rtimes(\Eu(T_o\SSSS_g)\times\R^\times)\, ,
\end{equation}
where
\begin{equation*}
\Eu(T_o\SSSS_g)=E\rtimes\OOO(E)=\R^n\rtimes\OOO(d,n-d)
\end{equation*}
is the group of rigid motions of $E\simeq T_o\SSSS_g$.
Note that  the isotropy representation   \eqref{eqIsotropyProj}  tells us that the factor $ T_oM$ of $H^{(1)}$ survives in $H^{(2)}$; it is also easy to see that the ``conformal factor'' $\R^\times$, since it scales the dependent variable, does not affect the second jet at zero of the quadric $\SSSS_g$, see \eqref{eqSupFidPROJ}: indeed, if we identify second--order jets with quadratic forms (see Section \ref{secSezioneDedicataADmitri}), then  the second jet at zero of the quadric $\SSSS_g$ is $g$ itself. Similarly, a transformation coming from the $\GL(T_o\SSSS_g)$ component of  the group $\Eu(T_o\SSSS_g)$ preserves $o^{(2)}$ if and only if it preserves $g$: therefore, it must be an element of $\OOO(T_o\SSSS_g)$.\par
In order to finish the proof of \eqref{eqAccaDueProj} it   remains to show that the ``translational'' component $T_o\SSSS_g$ of $\Eu(T_o\SSSS_g)$ does not move $o^{(2)}$: 
we postpone this  to the proof of the analogous property in Section~\ref{secCasAff} below (see Remark~\ref{remLem3121}), together with the proof  that the aforementioned component does move $o^{(3)}$, eventually showing that
\begin{equation*}
H^{(3)}=T_oM\rtimes(\OOO(T_o\SSSS_g)\times\R^\times)\, ,
\end{equation*}
thus concluding the whole proof.
\end{proof}
\begin{remark}
The structure of $H^{(1)}$ is that of
\begin{equation*}
H^{(1)}=\mathcal{H}\rtimes\Aut(\mathcal{H})\, ,
\end{equation*}
where
\begin{equation*}
\Lie(\mathcal{H})=E\oplus E^*\oplus \R p\, ,
\end{equation*}
with  $\mathcal{H}$ being the  $(2n+1)$--dimensional Heisenberg group. Indeed, from
\eqref{eqAccaUnoProj} and \eqref{eqAccaUnoProjBIS} it follows that
\begin{equation*}
\gh^{(1)}=\gh_{o^{(1)}}=(\gl(E)\oplus E\oplus  \R q ) \oplus (\R p\otimes E)\oplus\R p\, ,
\end{equation*}
that is,
\begin{equation}\label{eqAccaUnoHeisenberg}
\gh^{(1)}= \Lie(\mathcal{H})   \oplus\gl(E)\oplus\R q\, .
\end{equation}
In terms of traceless $(n+2)\times(n+2)$ matrices, an element 
\begin{equation*}
(  a, b^t, \alpha,A,\beta )
\end{equation*}
of the algebra \eqref{eqAccaUnoHeisenberg} corresponds to the matrix
\begin{equation*}
\left(\begin{array}{ccc}- \beta -\tr (A) & 0 & \alpha \\ 0 & A &  b \\0 & a^t &\beta \end{array}\right)\, .
\end{equation*}
\end{remark}
\begin{remark}
The   hypersurface  $\SSSS_g$ is a homogeneous manifold: namely,
\begin{equation*}
\SSSS_g=\SO(\nuovoW)/(\SO(\nuovoW)\cap H^{(1)})\, .
\end{equation*}
\end{remark}
It is indeed convenient, before passing to the application of Theorem \ref{thMAIN1}, to prove the analogous result for the affine case: after that, the two cases will go on in parallel, due to the fact that the structure of the model fiber of $J^3$ over $J^2$ does not feel the topology of the underlying manifold, that has changed from $\p^{n+1}$ to $\mathbb{A}^{n+1}$.

\section{Stabilizers of the $\Aff(n+1)$--action on    $J^\ell(n,\AAA^{n+1})$}\label{secCasAff}

By the symbol $\mathbb{A}^{n+1}$ we denote the linear space $\R^{n+1}$, regarded as an affine space. The affine space $\mathbb{A}^{n+1}$ is a manifold with the action of the affine group
\begin{equation*}
G=\Aff(n+1)=\R^{n+1}\rtimes \GL(n+1)\, ,
\end{equation*}
such that the vector normal subgroup $\R^{n+1}$ acts simply transitively.  We fix the standard basis
$$
\{e_0,e_1,\ldots, e_n \}\, 
$$ 
of $\R^{n+1}=\R e_0\oplus E$, and  we let 
$$(u,\x):=(u,x^1,\ldots, x^n)$$
be the corresponding    coordinates.
We  have then  the same $n$--dimensional space $E$ as before, with the same coordinates, but now the $(n+1)$--dimensional underlying manifold is
\begin{equation*}
M:=\mathbb{A}^{n+1}=\R^{n+1}=\R e_0\oplus E\, .
\end{equation*}
Since $\mathbb{A}^{n+1}$ still possesses the zero, we set   $o:=0\in\mathbb{A}^{n+1}$. 
In analogy to~\eqref{eqSupFidPROJ} we let $\SSSS_g^{\textrm{aff}}$ be 
the \emph{quadric}
\begin{equation}\label{eqSupFidAFF}
\SSSS_g^{\textrm{aff}}=\{ u=g( \x,\x)\} \, ,
\end{equation}
where $g$ is the same
scalar product on $E$ as before, see~\eqref{eqScalProdErrEnnPROJ}. 
In Section~\ref{sec.gianni}  we shall  prove that $\SSSS_g^{\textrm{aff}}$ is a fiducial hypersurface, see Proposition~\ref{corExPropFidHypCasoAff}.\par  
As before, we let $o^{(k)}:=[\SSSS_g^{\textrm{aff}}]_o^k$, for $k\geq 0$, so that  the point $o^{(1)}=[\SSSS_g^{\textrm{aff}}]_o^1$ will be again the tangent space $T_o\SSSS_g^{\textrm{aff}}\in J^1$, that is the hyperplane  $E=\R^n$ of $\R^{n+1}$.

\begin{lemma}\label{lemStrutturaSottogruppiCasoAffine}
The stabilizing subgroups of the origins $o^{(k)}$, for $k=0,1,2,3$, are:
\begin{eqnarray}
H &=& \GL(n+1)\, ,\label{eqH0casoAff}\\
H^{(1)} &=&  (\R^n\rtimes\GL(n))\times\R^\times\, ,\label{eqH1casoAff}\\
H^{(2)} &=&  (\R^n\rtimes\OOO(d,n-d))\times\R^\times\, ,\label{eqH2casoAff}\\
H^{(3)} &=&  \OOO(d,n-d)\times\R^\times\, .\nonumber
\end{eqnarray}
\end{lemma}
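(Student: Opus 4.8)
The plan is to carry out all four identifications inside $\GL(n+1)$, translating each jet condition at $o^{(k)}$ into an explicit condition on a block matrix; the only genuinely analytic ingredient is a short Taylor expansion of the image of the fiducial quadric under a stabilising linear map, and it is this expansion that produces the drop $H^{(3)}\subsetneq H^{(2)}$.

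Equation \eqref{eqH0casoAff} is immediate: for $G=\R^{n+1}\rtimes\GL(n+1)$ acting on $\AAA^{n+1}$ and $o=0$ the isotropy group is exactly the linear part $\GL(n+1)$. For $H^{(1)}$ I would use that the prolongation to $J^1$ of a linear automorphism $A$ fixing $o$ sends the $1$--jet $o^{(1)}=T_o\SSSS_g^{\textrm{aff}}=E=\{u=0\}$ to $A(E)$; hence $H^{(1)}=\Stab_{\GL(n+1)}(E)$, the parabolic subgroup preserving the hyperplane $E$. Writing $A$ in block form with respect to the splitting $M=\R e_0\oplus E$,
\[
A=\begin{pmatrix}\alpha & 0 \\ w & B\end{pmatrix},\qquad \alpha\in\R^\times,\ w\in E\simeq\R^n,\ B\in\GL(E)=\GL(n),
\]
one checks that the central subgroup of homotheties $\{\mu\cdot\id_{W}\mid\mu\in\R^\times\}$ is complementary to the subgroup $\{\alpha=1\}\simeq\R^n\rtimes\GL(n)$, which gives \eqref{eqH1casoAff}; intrinsically this is the description $H^{(1)}=T_oM\rtimes(\Aff(T_o\SSSS_g^{\textrm{aff}})\times\R^\times)$ already used in the projective case, cf.\ \eqref{eqAccaUnoProjGRUPPO}.

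For $H^{(2)}$ I would invoke Proposition~\ref{prop:affine.bundles}: in the affine chart the fibre $J^2_{o^{(1)}}$ is the affine space modelled on $S^2E^*\otimes N_o\SSSS_g^{\textrm{aff}}$, and since $\SSSS_g^{\textrm{aff}}=\{u=g(\x)\}$ with $g$ quadratic, $o^{(2)}$ corresponds to a fixed multiple of the form $g$. The key computation: the image of the graph $\{u=g(\x)\}$ under $A=\left(\begin{smallmatrix}\alpha & 0\\ w & B\end{smallmatrix}\right)$ is, near $o$, the graph of a function $v=v(\mathbf{y})$ obtained by solving $\mathbf{y}=B\x+g(\x)\,w$ for $\x$ and substituting into $v=\alpha\,g(\x)$; the implicit function theorem gives
\[
v(\mathbf{y})=\alpha\,g(B^{-1}\mathbf{y})-2\alpha\,g(B^{-1}\mathbf{y},B^{-1}w)\,g(B^{-1}\mathbf{y})+O(|\mathbf{y}|^{4}),
\]
where $g(\cdot)$ (resp.\ $g(\cdot,\cdot)$) denotes the associated quadratic (resp.\ bilinear) form. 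The second--order part is $w$--independent, so the translational parameter acts trivially on $2$--jets, and $A$ fixes $o^{(2)}$ iff $\alpha\,(B^{-1})^{*}g=g$, i.e.\ $B^{t}GB=\alpha G$, i.e.\ $B\in\CO(d,n-d)$; hence $H^{(2)}=\R^n\rtimes\CO(d,n-d)=(\R^n\rtimes\OOO(d,n-d))\times\R^\times$, which is \eqref{eqH2casoAff}. This is precisely the ``analogous property'' whose verification was postponed in the proof of Lemma~\ref{lemStrutturaSottogruppiCasoPROJ}, cf.\ \eqref{eqAccaDueProj}.

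Finally, for $H^{(3)}$ one reads off the cubic term in the same expansion. The $3$--jet of $\{u=g(\x)\}$ at $o$ has vanishing cubic part, so $o^{(3)}$ is the zero element of the model space $S^3E^*\otimes N_o\SSSS_g^{\textrm{aff}}$; the quadric stabiliser $\CO(d,n-d)$ (the matrices with $w=0$) fixes $\SSSS_g^{\textrm{aff}}$ itself, hence fixes $o^{(3)}$, while the displayed cubic term $-2\alpha\,g(B^{-1}\mathbf{y},B^{-1}w)\,g(B^{-1}\mathbf{y})$ is a nonzero cubic form as soon as $w\neq0$ (product of the nondegenerate quadratic $g(B^{-1}\mathbf{y})$ with the nonzero linear form $g(B^{-1}\mathbf{y},B^{-1}w)$). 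Therefore the translational $\R^n$ does not survive and $H^{(3)}=\OOO(d,n-d)\times\R^\times$. I expect the main obstacle to be exactly this order--by--order bookkeeping: one must check that the quadratic coefficient is genuinely independent of $w$ (so the full translation subgroup persists in $H^{(2)}$) while the cubic coefficient is genuinely $w$--dependent and is not accidentally annihilated by the constraint $B^{t}GB=\alpha G$ — a short but load--bearing step, since it is what forces $H^{(3)}\subsetneq H^{(2)}$ and, carried over through the affine chart, closes the two gaps left open in Lemma~\ref{lemStrutturaSottogruppiCasoPROJ}.
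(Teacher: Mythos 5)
Your proposal is correct and follows essentially the same route as the paper: block matrices for $H$ and $H^{(1)}$, the affine--bundle identification of $J^2_{o^{(1)}}$ and $J^3_{o^{(2)}}$ with symmetric forms twisted by the normal line, and a Taylor expansion of the transformed graph obtained by inverting $\mathbf{y}=B\x+g(\x)\,w$ to the relevant order, which is exactly the paper's computation with the approximate inverse $\x(\ttt)=\ttt-Q(\ttt)\w$; your cubic term $-2\alpha\,g(B^{-1}\mathbf{y},B^{-1}w)\,g(B^{-1}\mathbf{y})$ is the paper's $-2Q(\ttt)\langle \ttt,\w\rangle$ dressed with the linear part, and your nonvanishing argument (nonzero linear form times nondegenerate quadratic form) settles $H^{(3)}\subsetneq H^{(2)}$ just as in the paper. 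The one genuine organizational difference is that you expand the image of the quadric under a \emph{general} element of $H^{(1)}$ in a single pass, so the second--order condition appears as the coupled relation $B^tGB=\alpha G$, with the scaling of the dependent direction slaved to the conformal factor of the linear part, whereas the paper splits $H^{(1)}$ into the factors $A$, $\w$, $\mu$ and checks each separately; your version is actually the more robust one at that step, since whether the $\R^\times$ factor of $H^{(1)}$ by itself fixes $o^{(2)}$ depends on how that factor is realized, a subtlety your computation bypasses because the scaling only enters through the conformal relation. Your closing identification $\R^n\rtimes\CO(d,n-d)=(\R^n\rtimes\OOO(d,n-d))\times\R^\times$ is the same (slightly loose, but harmless here) rewriting of the conformal group that the statement of the lemma itself uses, so no gap remains.
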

\begin{proof}
Formula \eqref{eqH0casoAff} is well--known: an affine transformation preserves the zero, that is the origin $o^{(0)}=o=0$ of $\AAA^{n+1}$, if and only if it is linear, i.e., an element of $\GL(n+1)$.\par
Concerning  \eqref{eqH1casoAff}, let us note  that a $(n+1)\times (n+1)$ non--singular matrix preserves the hyperplane $\R^n$, that is the origin $o^{(1)}$, if and only if it has the form
\begin{equation*}
\left(\begin{array}{cc}A & \w \\0 & \mu\end{array}\right)\, ,
\end{equation*}
where $A\in\GL(n)$, $\w\in\R^n$ and $\mu\in\R^\times$. Identity 
 \begin{equation*}
\left(\begin{array}{cc}A & \w \\0 & \mu\end{array}\right)=\mu\cdot \left(\begin{array}{cc}\mu^{-1}A & \mu^{-1} \w \\0 & 1\end{array}\right) \ 
\end{equation*}
shows   that $\Stab_H(o^{(1)})$ is obtained from  the subgroup $\R^n\rtimes\GL(n)$ of matrices of the form
\begin{equation}\label{eqMatrAffEnnePErEnner}
\left(\begin{array}{cc} A &  \w \\0 & 1\end{array}\right)\, ,
\end{equation}
by multiplying it by the group $\R^\times$.\par  

To deal with~\eqref{eqH2casoAff}, it is convenient to introduce, by a slight abuse of notation, two special elements of $H^{(1)}$, namely
\begin{equation*}
M_{A,\mu} := \left(\begin{array}{cc}A & 0 \\0 &\mu\end{array}\right)\, , \quad
\w := \left(\begin{array}{cc}I_n & \w \\ 0 &1\end{array}\right)\, .
\end{equation*}
It is worth observing that $M_{A,\mu}$ acts naturally by $A$ on the hyperplane $\R^n$, while rescaling by $\mu$ the elements of the   complementary line $\R e_0$, whereas the vector $\w$ acts on the affine hyperplane $u=1$ by translation: in particular, it ``tilts'' the  line $\R e_0$ into the line   $\R(e_0+\w)$.\par 
%
Since
\begin{equation*}
\left(\begin{array}{cc}A & \w \\0 & \mu\end{array}\right)=\left(\begin{array}{cc}A & 0 \\0 & \mu\end{array}\right)\cdot \left(\begin{array}{cc}I_n & A^{-1}\w \\0 & 1\end{array}\right)\, ,
\end{equation*}
 any element of $H^{(1)}$ can be expressed  as  product of the above special elements:
\begin{equation}\label{eqFormulaAggiuntaDalReferee}
   H^{(1)}=\{ M_{A,\mu}\cdot \w\mid  A\in\GL(n)\, ,\mu\in\R^\times\, ,  \w\in\R^n\} .
\end{equation}
Let us pass to the first claim of \eqref{eqH2casoAff}, i.e., to the computation of the stabilizer $ \Stab_{H^{(1)} }(o^{(2)})$ of the second--order jet at $0\in\R^n$ of the quadric hypersurface $\SSSS_g^{\textrm{aff}}=\{ ( Q(\x),\x)\mid \x\in\R^n\}$, where  
$$
Q(\x):=g(\x,\x)
$$
is the quadratic form associated to the scalar product \eqref{eqScalProdErrEnnPROJ}.\par

To begin with, we act by a transformation of type $\w$    on the   hypersurface $\SSSS_g^{\textrm{aff}}$: it turns out that, even if the resulting hypersurface $\w(\SSSS_g^{\textrm{aff}})$   looks like a ``slanted paraboloid" (see the picture below), the second--order jet at zero of $\w(\SSSS_g^{\textrm{aff}})$ is the same as the original hypersurface  $\SSSS_g^{\textrm{aff}}$.
\par\bigskip\noindent\centerline{\epsfig{file=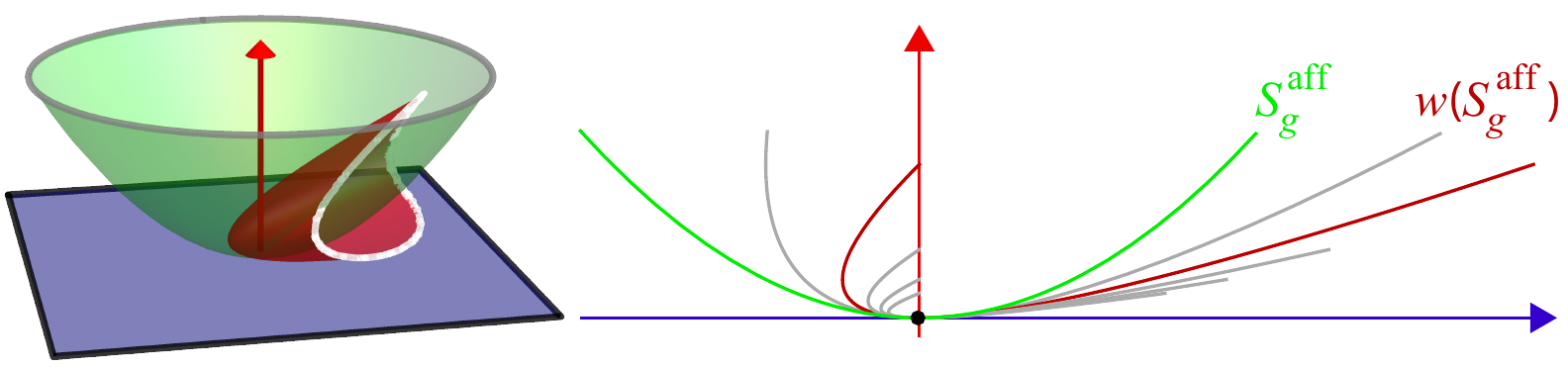,width=\textwidth}}\par\bigskip
In order to see this,  let us observe that 
\begin{equation*}
\w(\SSSS_g^{\textrm{aff}})=\{ \w\cdot (Q(\x),\x)\mid \x\in\R^n\}= \{ Q(\x), (\x+ Q(\x)\w)\mid \x\in\R^n\} \, ,
\end{equation*}
where  the function $\ttt(\x):=\x+ Q(\x)\w  $ is a small deformation of the identity in a   sufficiently small neighborhood of zero. As such, $\ttt(\x)$ will admit a (local) inverse. We claim, that
\begin{equation}\label{eqPseudoInversa}
\x(\ttt):=\ttt-Q(\ttt)\w\,
\end{equation}
approximates   the inverse of $\ttt(\x)$ up to third--order terms. Indeed,
\begin{align*}
\ttt(\x(\ttt))&=\ttt-Q(\ttt)\w+Q(\ttt-Q(\ttt)\w)\w\nonumber\\
&=\ttt-Q(\ttt)\w+(Q(\ttt)-2Q(\ttt)\langle \w,\ttt\rangle+Q^2(\ttt)Q(\w))\w\nonumber\\
&= \ttt- (2Q(\ttt)\langle \w,\ttt\rangle+Q^2(\ttt)Q(\w))\w\nonumber\\
&=\ttt +O(\|\ttt\|^3)\, .
\end{align*}
This will allow us to work with the graph of the function $f(\ttt):=Q(\x(\ttt))$ instead of the hypersurface $\w(\SSSS_g^{\textrm{aff}})$, as long as only  jets from  zero up to second order are concerned. In particular,
\begin{equation*}
\nabla f (0)= \nabla Q (0)\cdot \frac{\partial \x}{\partial \ttt}(0)=\nabla Q (0)\, ,
\end{equation*}
since the Jacobian $ \frac{\partial \x}{\partial \ttt}$ at zero is the identity. We have then proved that $[f]_0^1=o^{(1)}$. Analogously,
\begin{equation*}
\frac{\partial^2 f}{\partial t^i\partial t^j}(0)
=
\frac{\partial}{\partial t^j}\left( \frac{\partial Q }{\partial  x^k} \frac{\partial  x^k}{\partial  t^i}\right)(0)
 =
\frac{\partial^2 Q  }{\partial x^h\partial x^k } \frac{\partial x^h  }{\partial t^j }  \frac{\partial x^k }{\partial t^i }(0)+ \frac{\partial Q }{\partial  x^k}  \frac{\partial^2 x^k  }{\partial t^j \partial t^i }(0)
  =  \frac{\partial^2 Q  }{\partial x^h\partial x^k }(0)\delta^h_j\delta^k_i
   =  \frac{\partial^2 Q  }{\partial x^i\partial x^j } (0)\,,
\end{equation*}
since the first derivatives of $Q$  vanish at the origin. Then we also have that $[f]_0^2=o^{(2)}$, i.e.,   $\w\cdot o^{(2)}=o^{(2)}$.\par 
Therefore, in view of~\eqref{eqFormulaAggiuntaDalReferee},  
\begin{equation}\label{eqIntermedia}
   H^{(2)}=\{ M_{A,\mu}\in H^{(1)}  \mid  M_{A,\mu}\cdot o^{(2)}=o^{(2)}\} ,
\end{equation}
so that it remains to compute the second--order jet at zero of the hypersurface
\begin{equation*}
M_{A,\mu}(\SSSS_g^{\textrm{aff}})=\{   (\mu Q(\x),A\cdot\x)\mid \x\in\R^n\}= \{ (\mu A^{-1\,\ast}(Q)(\x),\x), \mid \x\in\R^n\} \, ,
\end{equation*}
and impose that it be equal to $o^{(2)}$.\par 
%
%
Since both $Q$ and $\mu A^{-1\,\ast}(Q)$ are quadratic forms, their second--order jets at zero coincides if and only if 
\begin{equation*}
\mu A^{-1\,\ast}(Q)=Q\Leftrightarrow A^*(Q)=\mu Q\, , 
\end{equation*}
i.e., $A$ is a conformal transformation of $Q$, and $\mu$ is the corresponding conformal factor, uniquely determined by $A$. In other words,
  $A\in \OOO(d,n-d)\cdot\R^\times$ and $\mu^n=\det(A)^2$, 
which concludes the proof that $H^{(2)} =(\R^n\rtimes\OOO(d,n-d))\times\R^\times$.\par
The last case, i.e., the second claim of \eqref{eqH2casoAff}, will be dealt with in a similar fashion: to compute $H^{(3)}$, we first rewrite $H^{(2)}$ as 
%
\begin{equation*}
   H^{(2)}=\{ M_{ A,\mu}\cdot \w\mid  A\in\OOO(d,n-d)\times\R^\times\, ,\  \mu^n=\det(A)^2\, ,\   \w\in\R^n\} ,
\end{equation*}
in analogy to~\eqref{eqFormulaAggiuntaDalReferee}.\par 
Since both $Q$ and $\mu A^{-1\,\ast}(Q)$ have vanishing third--order jets at zero, in order to compute $H^{(3)}$ it suffices to impose that transformations of type $\w$ preserve $o^{(3)}$, i.e.,
\begin{equation*}
   H^{(3)}=\{ M_{ A,\mu}\cdot \w\in H^{(2)}  \mid  \w\cdot o^{(3)}=o^{(3)}\} ,
\end{equation*}
in analogy to the previous case~\eqref{eqIntermedia}. In this last case, however,  
%
  the third--order jet at zero of $\w(\SSSS_g^{\textrm{aff}})$ will not be the same as $o^{(3)}$, unless  $\w=0$.  We have already observed that $f(\ttt)$ and $Q(\x)$ have the same derivatives at 0 up to order two.\par
To study the third--order jet at zero of $\w(\SSSS_g^{\textrm{aff}})$  we need    compute   the third derivatives of $f$, where now $f(\ttt)=Q(\x(\ttt))$, with $\x(\ttt)$ being the \emph{true inverse} of $\ttt(\x)$, and not the approximated one, i.e.,
\eqref{eqPseudoInversa}.  The reason why we use the same symbol for both the exact and the approximated (local) inverse, beside an evident     notation simplification, is that the final  result will depend  only on the approximated one.
\begin{align*}
 \frac{\partial^3 f  }{\partial t^i\partial t^j\partial t^l }&=  \frac{\partial  }{\partial t^j }\left(  \frac{\partial^2 Q  }{\partial x^h\partial x^k } \frac{\partial x^h  }{\partial t^j }  \frac{\partial x^k }{\partial t^i } + \frac{\partial Q }{\partial  x^k}  \frac{\partial^2 x^k  }{\partial t^j \partial t^i } \right)\nonumber\\
 &= \frac{\partial^3 Q}{\partial x^h\partial x^k \partial x^s}\frac{\partial x^s }{\partial t^l}\frac{\partial x^h }{\partial t^j}\frac{\partial  x^k}{\partial t^i}+\frac{\partial^2 Q  }{\partial x^h\partial x^k}\frac{\partial^2 x^h  }{\partial t^j\partial t^l}\frac{\partial x^k  }{\partial t^i}+\frac{\partial^2 Q  }{\partial x^h\partial x^k}\frac{\partial^2 x^k  }{\partial t^i\partial t^l}\frac{\partial x^h  }{\partial t^j}+\frac{\partial^2 Q  }{\partial x^k\partial x^s}\frac{\partial^2 x^k  }{\partial t^j\partial t^i}\frac{\partial x^s  }{\partial t^l}+\frac{\partial Q}{\partial x^k}\frac{\partial^3 x^k}{\partial t^i\partial t^j\partial t^l  }\, .
\end{align*}
Evaluating the last expression at 0 we obtain
\begin{align}
 \frac{\partial^3 f  }{\partial t^i\partial t^j\partial t^l }(0)&=  2Q_{hk}\frac{\partial^2 x^h  }{\partial t^j\partial t^l}(0)\delta_i^k+2Q_{hk}\frac{\partial^2 x^k  }{\partial t^i\partial t^l}(0)\delta_j^h+2Q_{ks}\frac{\partial^2 x^k  }{\partial t^j\partial t^i}(0)\delta_l^s\nonumber\\
 &=2Q_{hi}\frac{\partial^2 x^h  }{\partial t^j\partial t^l}(0) +2Q_{jk}\frac{\partial^2 x^k  }{\partial t^i\partial t^l}(0) +2Q_{kl}\frac{\partial^2 x^k  }{\partial t^j\partial t^i}(0)\, .\label{eqDerTerzaEffeInZero}
 \end{align}
 Now, for the purpose of computing the second derivatives of $\x$ at 0 in \eqref{eqDerTerzaEffeInZero}, we can use the approximated inverse, that is
 \eqref{eqPseudoInversa}:
\begin{equation*}
\frac{\partial^2 \x(\ttt)  }{\partial t^i\partial t^j}(0)=\frac{\partial^2 (\ttt-Q(\ttt)\w)  }{\partial t^i\partial t^j}(0)=-2Q_{ij}\w\, .
\end{equation*}
Indeed, the discrepancy between the true and the approximated inverse, being of third order in $\x$, will still vanish in 0, even after a double   differentiation. \par
Therefore,   the third--order term of the   Taylor expansion of $f$ around 0 (where, it is worth stressing, $f$ is the one computed via the true inverse of $\ttt(\x)$) is  precisely
   \begin{equation}\label{eqFormaModificata}
\frac{1}{3!} \frac{\partial^3 f  }{\partial t^i\partial t^j\partial t^l }(0)t^it^jt^l=-\frac{1}{6}\left(2Q_{hi}2Q_{jl}w^h +2Q_{jk}2Q_{il}w^k +2Q_{kl}2Q_{ji}w^k \right)t^it^jt^l=-2Q(\ttt)\langle \ttt,\w\rangle\, .
\end{equation}
Since we have already observed that $\w(\SSSS_g^{\textrm{aff}})$ and $\SSSS_g^{\textrm{aff}}$ have the same jets at 0 up to order 2, and the third--order derivatives of  $Q$ are zero, formula \eqref{eqFormaModificata} shows that $[\SSSS_g^{\textrm{aff}}]_0^3=o^{(3)}$   if and only if $\w=0$.\par
This shows that $ \Stab_{H^{(2)} }(o^{(3)})=\OOO(d,n-d)\times\R^\times$, thus concluding the entire proof.
\end{proof}
\begin{remark}\label{remLem3121}
As we have anticipated, the proof of Lemma \ref{lemStrutturaSottogruppiCasoAffine} above also provides the missing steps in the proof of Lemma \ref{lemStrutturaSottogruppiCasoPROJ}; observe also that the residual action of the group $G$ on the fiber $J^3_{o^{2}}$ is exactly the same, that is, that of $\Eu(\R^n)\times \R^\times$. It is then reasonable to continue analyzing the two cases in parallel.
\end{remark}

\section{$\PGL(n)$-- and $\Aff(n)$--invariant PDEs on hypersurfaces of $\p^{n+1} $ and $\AAA^{n+1}$}\label{sec.gianni}


\begin{proposition}\label{corExPropFidHypCasoAff}
The projective hyperquadric $\SSSS_g$ defined by \eqref{eqSupFidPROJ} (resp. the quadric hypersurface $\SSSS_g^{\textrm{aff}}$ defined by \eqref{eqSupFidAFF}) is a fiducial hypersurface of order both 2 and 3 with respect to the action of the affine group $\Aff(n+1)$ on the affine space $\AAA^{n+1}$ (resp., of the projective group $\SL(n+2)$ on the projective space $\p^{n+1}$), in the sense of Definition~\ref{def.Dmitri}.
\end{proposition}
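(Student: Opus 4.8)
The plan is to verify, for $k=2$ and $k=3$, the two conditions of Definition~\ref{defMAIN} directly from the stabilizer chains of Lemma~\ref{lemStrutturaSottogruppiCasoPROJ} (projective case) and Lemma~\ref{lemStrutturaSottogruppiCasoAffine} (affine case), combined with the dimension count for the orbits $G\cdot o^{(l)}\cong G/H^{(l)}$. By Proposition~\ref{prop:affine.bundles} one has $\dim J^1=2n+1$, $\dim J^2=2n+1+\binom{n+1}{2}$ and $\dim J^3=\dim J^2+\binom{n+2}{3}$, quantities depending on $n$ only; together with the fact, noted at the start of this subsection, that the residual action on the fibre $J^3_{o^{(2)}}$ is $\Eu(\R^n)\times\R^\times$ in both cases, this makes the projective and affine verifications proceed in parallel.

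First I would settle condition~(1). Substituting $\dim\SL(n+2)=(n+2)^2-1$ (resp.\ $\dim\Aff(n+1)=(n+1)(n+2)$) and the dimensions of $H^{(1)},H^{(2)}$ read off from the cited lemmas, an elementary arithmetic identity gives $\dim(G\cdot o^{(1)})=\dim J^1$ and $\dim(G\cdot o^{(2)})=\dim J^2$. Since an orbit is an immersed submanifold, a full-dimensional orbit is open; hence $G\cdot o^{(1)}$ is open in $J^1$ and $G\cdot o^{(2)}$ is open in $J^2$, which is condition~(1) for $k=2$ with $\check J^1:=G\cdot o^{(1)}$, and for $k=3$ with $\check J^2:=G\cdot o^{(2)}$. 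Set $\check J^3:=\pi_{3,2}^{-1}(\check J^2)$.

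The crux is condition~(2) for $k=3$. By $G$-homogeneity of $\check J^2$ it is enough to identify the fibre $\RR_{o^{(2)}}=H^{(2)}\cdot o^{(3)}$ in $J^3_{o^{(2)}}$. By Proposition~\ref{prop:affine.bundles}, in the chart $(u,\x)$ the affine space $J^3_{o^{(2)}}$ is modelled on $S^3T_o^*\SSSS_g\otimes N_o\SSSS_g\cong S^3\R^{n\,\ast}$, and $o^{(3)}$ is its origin, the quadric having no cubic Taylor coefficient at $o$. The computation in the proof of Lemma~\ref{lemStrutturaSottogruppiCasoAffine} now does the work: $\OOO(d,n-d)\times\R^\times\subset H^{(2)}$ fixes $o^{(3)}$, whereas the translational factor $\w\in\R^n$ translates $o^{(3)}$ exactly by the cubic form $Q\odot\langle\,\cdot\,,\w\rangle$ of \eqref{eqFormaModificata}, with $Q(\x)=g(\x,\x)$. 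Hence $\RR_{o^{(2)}}=o^{(3)}+\bigl(Q\odot T_o^*\SSSS_g\bigr)$ is an affine subspace of dimension $n$ (the map $\ell\mapsto Q\odot\ell$ being injective since $Q\neq0$); transporting it over $\check J^2$ by $G$ yields a rank-$n$ affine sub-bundle $\RR$ of $J^3|_{\check J^2}$, which is \emph{proper} because $\dim S^3\R^{n\,\ast}=\binom{n+2}{3}>n$ for every $n\geq2$. The same reasoning applies verbatim to $\SSSS_g\subset\p^{n+1}$, so in both cases the quadric is a fiducial hypersurface of order~$3$.

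For $k=2$, condition~(2) concerns $\RR=G\cdot o^{(2)}$: since the linear part of $H^{(1)}$ on the fibre $J^2_{o^{(1)}}\cong S^2\R^{n\,\ast}$ is the full $\GL(n)$-action, $H^{(1)}\cdot o^{(2)}$ is the open set of non-degenerate quadratic $2$-jets of signature $(d,n-d)$, and $\RR$ is its $G$-saturation, an open and proper sub-bundle of $J^2|_{\check J^1}$ whose complementary hypersurface is exactly the $G$-invariant parabolic Monge--Amp\`ere equation $\{\det=0\}$; this is the order-$2$ fiducial configuration alluded to after Definition~\ref{defMAIN}. I expect the genuinely delicate step to be the order-$3$ identification of $\RR_{o^{(2)}}$: one must know that $H^{(2)}\cdot o^{(3)}$ is \emph{exactly} the affine subspace $Q\odot T_o^*\SSSS_g$—of the right dimension by $\dim(H^{(2)}/H^{(3)})=n$, and of the right (affine, rather than determinantal) shape by the explicit local inversion of $\ttt(\x)=\x+Q(\x)\w$ performed in the proof of Lemma~\ref{lemStrutturaSottogruppiCasoAffine}—so that $\RR$ is a genuine affine sub-bundle and not merely a $G$-orbit.
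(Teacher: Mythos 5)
Your proposal is correct and, at its crux, follows the paper's own route: the decisive step --- identifying $H^{(2)}\cdot o^{(3)}$ with the affine subspace $o^{(3)}+\R^{n\,\ast}\odot\Span{g}$ inside $J^3_{o^{(2)}}\simeq S^3\R^{n\,\ast}\otimes\Span{\partial_u}$, and then spreading it over $\check{J}^2$ by the $G$--action --- is exactly how the paper verifies condition (2), using the same computation \eqref{eqFormaModificata} from the proof of Lemma \ref{lemStrutturaSottogruppiCasoAffine} (your injectivity remark for $\ell\mapsto Q\odot\ell$ is sound, since $S^\bullet\R^{n\,\ast}$ is an integral domain). The only genuine divergence is condition (1): you obtain openness of $G\cdot o^{(1)}$ and $G\cdot o^{(2)}$ by an orbit--stabilizer dimension count against the stabilizer chains of Lemmas \ref{lemStrutturaSottogruppiCasoPROJ} and \ref{lemStrutturaSottogruppiCasoAffine}, whereas the paper argues directly that $G$ is already transitive on $J^1$ and that the $\GL(n)$--orbit of the non--degenerate form $Q$ is open in the fibre $S^2\R^{n\,\ast}$; both are valid, the paper's fibrewise argument having the small benefit of exhibiting the complement (the parabolic Monge--Amp\`ere equation) concretely, a point you also record. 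Finally, note that for the order--$2$ part of the statement your verification of condition (2) produces an open, proper, but \emph{non--affine} sub--bundle, which does not literally match Definition \ref{defMAIN}; the paper's proof is equally silent here (it dismisses the order--$2$ claim as ``obvious''), so this is a looseness of the statement itself rather than a defect specific to your argument.
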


\begin{proof}
For the order $k=2$ the proof is analogous  to the Euclidean case, see     \cite[Proposition~4.1]{alekseevsky2020general}. Indeed,   $J^1$ is  the same as  $\p T^*\R^{n+1}$ or, equivalently, the flag space $F_{0,n}$, on which the linear group $\GL(n+1)$ already acts transitively, let alone $\Aff(n+1)$.  So, $o^{(1)}$ is the flag $(0,\R^n)$ and the action of $H$ on   $J^1_0=\p(\R^{n+1\, \ast})$ is  transitive. Therefore, since  the $\Aff(n+1)$--orbit of $o$ is the entire $M$,   the $\Aff(n+1)$--orbit  of $o^{(1)}$ is the entire space $J^1$, viz.
\begin{equation*}
J^1(n, \AAA^{n+1})=\Aff(n+1)/H^{(1)}\, .
\end{equation*}
To deal with the case $k=3$ we shall  study the orbit $H^{(1)}\cdot o^{(2)}$ in $J^2_{o^{(1)}}$, bearing in mind the identification 
\begin{equation}\label{eqLaSolitaIntramontabileIndentificazione}
J^2_{o^{(1)}}\equiv S^2 T_o^*\SSSS_g^{\textrm{aff}}\otimes N_o\SSSS_g^{\textrm{aff}}=S^2\R^{n\,\ast}\otimes \Span{\partial_u}\, ,
\end{equation}
cf.~\eqref{eqn:identification}, and the description \eqref{eqH1casoAff} of $H^{(1)}$. Since the quadratic form $Q$ associated to the scalar product is non--degenerate, its  $\GL(n)$--orbit will be open. Incidentally, we see the appearance of a $\Aff(n+1)$--invariant \emph{second--order} PDE, namely the Monge--Amp\`ere equation $\E\subset J^2$ given by $\det(u_{ij})=0$.\par
Summing up,
\begin{equation*}
\check{J}^2=\Aff(n+1)\cdot o^{(2)}=\Aff(n+1)/H^{(2)}\,
\end{equation*}
is an open subset of $J^2(n, \AAA^{n+1})$ (which is contained in the complement $ J^2(n, \AAA^{n+1})\smallsetminus\E$
of the Monge--Amp\`ere equation $\E$). Therefore, the assumption (A1) of Definition \ref{def.Dmitri} is met for  the order $k=3$.\par
It remains to check assumption (A2) of Definition \ref{def.Dmitri}: we begin by showing that the orbit $H^{(2)}\cdot o^{(3)}$ is a proper affine sub--space of $J^3_{o^{(2)}}$. To this end, we shall need the identification  
\begin{equation}\label{eqLaSolitaIntramontabileIndentificazioneTRE}
J^3_{o^{(2)}}\equiv  S^3\R^{n\,\ast}\otimes \Span{\partial_u}\, ,
\end{equation}
that is analogous to~\eqref{eqLaSolitaIntramontabileIndentificazione}. 
Indeed, from the proof of Lemma~\ref{lemStrutturaSottogruppiCasoAffine} above it is clear that the $H^{(2)}$--orbit of $o^{(3)}$ is made of the elements
\begin{equation*}
[\w(\SSSS_g^{\textrm{aff}})]_o^3\, ,
\end{equation*}
with $\w\in\R^n$. Therefore, from formula \eqref{eqFormaModificata} it follows immediately that
\begin{equation*}
[\w(\SSSS_g^{\textrm{aff}})]_o^3-o^{(3)}=[ -2Q(\x)\langle \x, \w\rangle ]_0^3\,
\end{equation*}
and then \eqref{eqLaSolitaIntramontabileIndentificazioneTRE} allows to identify the difference $[\w(\SSSS_g^{\textrm{aff}})]_o^3-o^{(3)}$ with the element
\begin{equation}\label{eqElementoDifferenzaCasAff}
-2\w^\#\odot g
\end{equation}
of the vector space $S^3\R^{n\,\ast}\otimes \Span{\partial_u}$, where $\w^\#$ is the dual covector to $\w$ by means of the scalar product \eqref{eqScalProdErrEnnPROJ}.
In other words, as $\w$ ranges in $\R^n$, \eqref{eqElementoDifferenzaCasAff} describes the linear subspace
\begin{equation*}
\R^{n\,\ast}\odot \Span{g}\subset S^3\R^{n\,\ast}\, .
\end{equation*}
By construction, this is the linear space modeling the fibre $H^{(2)}\cdot o^{(3)}$. Since the same is true for any fibre,  assumption (A2) of Definition \ref{def.Dmitri} is met; indeed, as we pointed out in Section \ref{sec:descriptions} above, assumption (A2), in the case when   \eqref{eqn:decomp.tau.H} holds, is the same as having a (proper) affine sub--bundle, and  \eqref{eqn:decomp.tau.H} 
immediately follows from \eqref{eqH2casoAff}.\par
The projective case can be dealt with analogously.
\end{proof}

\subsection{The main result}

\begin{theorem}\label{corCasoAff}
Fix a scalar product $g$ of signature $(d,n-d)$ as in  \eqref{eqScalProdErrEnnPROJ}  and let $\SSSS_g^{\textrm{aff}}\subset\AAA^{n+1}$ (resp., $\SSSS_g\subset\p^{n+1}$) be the corresponding fiducial (quadratic)
hypersurface.
Let
\begin{equation}\label{eqS03.traceless}
S_0^3\R^{n\,\ast}:=\frac{S^3\R^{n\,\ast}}{\R^{n\,\ast}\odot\Span{g}}
\end{equation}
denote the space of \emph{trace--free cubic forms} on $\R^n$. Then, for any $\CO(d,n-d)$--invariant hypersurface
\begin{equation*}
\Sigma\subset S_0^3\R^{n\,\ast}\, ,
\end{equation*}
we obtain an  $\Aff(n+1)$--invariant third--order PDE $\E_\Sigma\subset J^3(n,\AAA^{n+1})$ (resp., an  $\SL(n+2)$--invariant third--order PDE $\E_\Sigma\subset J^3(n,\p^{n+1})$).
\end{theorem}
\begin{proof}
Let us begin with the affine case. The first step consists in proving  that $\tau_\RR(H^{(2)})$--invariant hypersurfaces in
\begin{equation*}
\frac{S^3T_o^* \SSSS_g^{\textrm{aff}}  \otimes N_o\SSSS_g^{\textrm{aff}}}{R_{o^{(2)}}}
\end{equation*}
 are the same as $\CO(p,n-p)$--invariant hypersurfaces in $S_0^3\R^{n\,\ast}$. To this end, recall the structure of $H^{(2)}$, studied in Lemma~\ref{lemStrutturaSottogruppiCasoAffine} (see, in particular, formula~\eqref{eqH2casoAff}) and observe that the factor $\R^\times$ acts by multiplication by $\mu\in\R^\times$ on $N_o\SSSS_g^{\textrm{aff}}$. The factor $\OOO(p,n-p)$ acts naturally on $S^3T_o^*\SSSS_g^{\textrm{aff}}$, which can be identified with $S^3\R^{n\,\ast}$. According to Proposition \ref{corExPropFidHypCasoAff} above,   an element $\w$ in the factor $\R^{n}$ acts by shifting along $R_{o^{(2)}}=\R^{n\,\ast}\odot\Span{g}$ by $-2\w^\#\odot g$, see also   \eqref{eqElementoDifferenzaCasAff}, 
and hence its action on the quotient is trivial.\par
The claim then follows from    Theorem \ref{thMAIN1}, recalling that, up to a covering, $\CO(d,n-d)=\OOO(d,n-d)\times\R^\times$.\par
Since the projective case can be dealt with analogously, we omit the proof.
\end{proof}
 
\subsection{Coordinate description}\label{secCoordDescr}

Since the problem is, by its nature, a local one, we shall not consider the projective case, since the affine space $\AAA^{n+1}$ can be considered as an affine neighborhood embedded in $\p^{n+1}$.
Again, we extend the global  coordinate  system $\{ u,x^1,\ldots, x^n \}$  of   $\AAA^{n+1}$   to a (local) coordinate system of  $J^3(n,\AAA^{n+1})$: see  also  Section~\ref{subJetsSubs}.
\begin{lemma}\label{lemPrimoLemmaSemplificatoreAFF}
 Let   $\E_\Sigma$ be the   $\Aff(n+1)$--invariant equation associated  to the $\CO(d,n-d)$--invariant hypersurface  $\Sigma$, as in Theorem  \ref{corCasoAff} above. Then, in the aforementioned coordinate system on $J^3$, the equation  $\E_\Sigma$ can be described as $\{ f=0\}$, where
the function $f=f(u_{ij},u_{ijk})$, that  does not depend on  $u,x^1,\ldots, x^n,u_1,\ldots,u_n $, is the same function describing the hypersurface $\Sigma_{o^{(1)}}$ of $J^3_{o^{(1)}}$.
\end{lemma}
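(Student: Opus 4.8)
The plan is to mimic the argument of Lemma \ref{lemPrimoLemmaSemplificatore} (the Euclidean case) and of Lemma \ref{lemPrimoLemmaSemplificatoreCasConf} (the conformal case), the only substantive change being that now the distinguished abelian subgroup of $G=\Aff(n+1)$ consists of the translations of $\AAA^{n+1}$, whose orbit through $o$ is all of $M=\AAA^{n+1}$, and that the order of the equation is $3$ rather than $2$. First I would invoke Lemma \ref{lemLemmaLemmino} applied to the bundle $\pi_{3,0}:J^3(n,\AAA^{n+1})\to J^0=\AAA^{n+1}$ (restricted, if one wishes to be careful, to the open domain $\check J^3=\pi_{3,2}^{-1}(\check J^2)$ appearing in Theorem \ref{thMAIN1}), together with the abelian subgroup $T=\R^{n+1}$ of translations, which acts transitively on the base with stabiliser of $o$ equal to the whole of $H=\GL(n+1)$ by \eqref{eqH0casoAff}. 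This identifies $G$--invariant hypersurfaces of $\check J^3$ with $H$--invariant hypersurfaces of the fibre $J^3_o$, and in particular realises $\E_\Sigma$ as the $G$--orbit (equivalently, by transitivity of $T$, the $T$--orbit) of its fibre $(\E_\Sigma)_o\subset J^3_o$ over the origin.

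Next I would observe that, in the Darboux coordinates $(u,x^i,u_i,u_{ij},u_{ijk})$ extending the global chart on $\AAA^{n+1}$, the fibre $J^3_o$ over $o$ carries the coordinates $(u_i,u_{ij},u_{ijk})$, so the hypersurface $(\E_\Sigma)_o$ is cut out by some function $f=f(u_i,u_{ij},u_{ijk})$. To promote this to a description of all of $\E_\Sigma$ it suffices to check that a translation $\phi\in\R^{n+1}$ acts trivially on all three families of jet coordinates: indeed $\phi^{(1)\ast}(u_i)=u_i$, $\phi^{(2)\ast}(u_{ij})=u_{ij}$, $\phi^{(3)\ast}(u_{ijk})=u_{ijk}$, because translations do not affect derivatives of a graph. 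Hence $\E_\Sigma=T\cdot(\E_\Sigma)_o$ is described by the very same equation $f=0$ in the global chart. Finally I would refine the dependence of $f$: since the fiducial hypersurface has order $3$ with $o^{(1)}$ generating an open orbit (Proposition \ref{corExPropFidHypCasoAff} and point (1) of Definition \ref{defMAIN}), the orbit $\check J^2=\Aff(n+1)\cdot o^{(2)}$ is open in $J^2$, so $\RR$ and therefore $\E_\Sigma$ project \emph{onto} $\check J^2$; concretely the fibre of $\E_\Sigma$ is fibred over an open subset of $J^2$ and one reduces, via a transformation from $H^{(1)}$ (which by \eqref{eqH1casoAff} acts transitively on that open orbit in $J^2_{o^{(1)}}$ — the $\GL(n)$--orbit of the non-degenerate $g$), to the single concrete fibre $J^3_{o^{(2)}}\cong S^3\R^{n\ast}\otimes\Span{\partial_u}$. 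Arguing exactly as in step (3) of the proof of Theorem \ref{thMAIN1}, $(\E_\Sigma)_{o^{(1)}}$ is the affine subspace $\RR_{o^{(2)}}$ translated by $\mathrm{pr}^{-1}(\Sigma)$, so its describing function depends only on $(u_{ij},u_{ijk})$ and, on the fibre $J^3_{o^{(2)}}$, is literally the function defining $\Sigma$ pulled back along the projection \eqref{eqS03.traceless}. This yields that $f=f(u_{ij},u_{ijk})$ is independent of $u,x^i,u_i$ and coincides with the function describing $(\Sigma)_{o^{(1)}}$ in $J^3_{o^{(1)}}$.

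The only point requiring a little care — and the closest thing to an obstacle — is the last reduction: one must be sure that the passage from the whole fibre $J^3_{o^{(1)}}$ to the concrete fibre $J^3_{o^{(2)}}$ does not introduce extra dependence on $u_i$. This is exactly what is guaranteed by the transitivity of the $\GL(n)\subset H^{(1)}$ action on the open $\check J^2\cap J^2_{o^{(1)}}$ together with the fact (established inside the proof of Lemma \ref{lemStrutturaSottogruppiCasoAffine}, via the identifications \eqref{eqLaSolitaIntramontabileIndentificazione} and \eqref{eqLaSolitaIntramontabileIndentificazioneTRE}) that $\w$-type and $\mu$-type transformations act on $J^3_{o^{(2)}}$ only by shifts along $R_{o^{(2)}}$ and by scaling, both of which descend trivially to the quotient where $\Sigma$ lives. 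Granting this, the argument is a verbatim transcription of the Euclidean proof with $J^2$ replaced by $J^3$, so I would keep the write-up short and refer back to Lemma \ref{lemPrimoLemmaSemplificatore} and to the computations in the proof of Lemma \ref{lemStrutturaSottogruppiCasoAffine}.
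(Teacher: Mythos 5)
Your first reduction (Lemma \ref{lemLemmaLemmino} applied to $\pi_{3,0}$ with the translation subgroup $\R^{n+1}$, whose stabiliser at $o$ is $H=\GL(n+1)$) only yields a description of $\E_\Sigma$ by a function $f=f(u_i,u_{ij},u_{ijk})$ independent of $u,x^1,\dots,x^n$: this is the Euclidean-type conclusion and is fine as far as it goes. The genuine gap is in your step removing the dependence on the $u_i$'s. The transitivity of $\GL(n)\subset H^{(1)}$ on the open orbit in $J^2_{o^{(1)}}$ and the description of the fibre over $o^{(2)}$ as $\RR_{o^{(2)}}+\pr^{-1}(\Sigma)$ (step (3) of Theorem \ref{thMAIN1}) concern only the coordinates $u_{ij}$ and $u_{ijk}$: neither $J^3_{o^{(1)}}$ nor $J^3_{o^{(2)}}$ involves the $u_i$'s at all, so the worry you flag about ``the passage from $J^3_{o^{(1)}}$ to $J^3_{o^{(2)}}$ introducing dependence on $u_i$'' is not where the issue lies, and nothing in your argument relates the fibres of $\E_\Sigma$ over points of $J^1_o$ with different tangent hyperplanes, i.e.\ different values of the $u_i$'s. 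The claim ``so its describing function depends only on $(u_{ij},u_{ijk})$'' therefore does not follow from what precedes it.

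What is needed, and what the paper actually uses, are the shear transformations
\begin{equation*}
\phi_{\w}=\left(\begin{array}{cc}I_n & 0\\ \w & 1\end{array}\right)\,,\qquad \w\in\R^n\,,
\end{equation*}
for which $\phi_{\w}^{(1)\,\ast}(u_i)=u_i+w_i$, while $\phi_{\w}^{(2)\,\ast}(u_{ij})=u_{ij}$ and $\phi_{\w}^{(3)\,\ast}(u_{ijk})=u_{ijk}$. The paper enlarges the translation group to the $(2n+1)$--dimensional subgroup $T=\R^{n+1}\rtimes\{\phi_{\w}\mid\w\in\R^n\}$ and applies Lemma \ref{lemLemmaLemmino} to the bundle $J^3\to J^1$ rather than $J^3\to J^0$: since $T$ acts transitively on (the relevant open part of) $J^1$ and trivially on the fibre coordinates $(u_{ij},u_{ijk})$, one obtains in a single stroke both the independence of $f$ from $u,x^i,u_i$ and the identification of $f$ with the function cutting out the fibre over $o^{(1)}$ in $J^3_{o^{(1)}}$. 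Alternatively, your own first step could be completed by observing that the $\phi_{\w}$ belong to $H=\GL(n+1)$, so the $H$--invariance of $(\E_\Sigma)_o\subset J^3_o$ forces it to be a cylinder in the $u_i$--directions; but as written your argument does not close this gap, and the further reduction to $J^3_{o^{(2)}}$ is not needed for the statement, which only identifies $f$ with the function describing the hypersurface of $J^3_{o^{(1)}}$.
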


\begin{proof}
It is a consequence of Lemma \ref{lemLemmaLemmino}, where the bundle  is
\begin{equation*}
J^1\times J^3_{o^{(1)}}\subset J^3(n,\AAA^{n+1})
\end{equation*}
and the subgroup $T\subset G=\Aff(n+1)$ will be the $(2n+1)$--dimensional group
\begin{equation*}
T=\R^{n+1}\rtimes\left\{  \left(\begin{array}{cc}I_n & 0 \\\w & 1\end{array}\right) \mid \w\in\R^n \right\}\, .
\end{equation*}
The first factor of $T$ acts by translations on $\R^{n+1}$ and  the lifted translations fix the $u_i$'s and, similarly, the $u_{ij}$'s  and the $u_{ijk}$'s. Therefore, it is enough  the first factor of $T$ to fulfil  the    hypothesis of Lemma \ref{lemLemmaLemmino}.\par
Let us consider now
\begin{equation*}
\phi=\left(\begin{array}{cc}I_n & 0 \\\w & 1\end{array}\right)\, .
\end{equation*}
Easy computations show that $\phi^{(1)\,\ast}(u_i)=u_i+w_i$, whereas $\phi^{(2)\,\ast}(u_{ij})=u_{ij}$ and $\phi^{(3)\,\ast}(u_{ijk})=u_{ijk}$. The first fact shows that $T$ acts transitively on $J^1$ (since the translations act transitively on $J^0$ and the $\phi$'s act transitively on the fibres of $J^1\to J^0$). The second fact shows that $T$ acts trivially on the fibre $J^3_{o^{(1)}}$. Thus, the result follows from Lemma \ref{lemLemmaLemmino} applied to the  group $T$.
\end{proof}
\begin{example}
 For $n=2$, a straightforward computation based on the proof of Lemma \ref{lemPrimoLemmaSemplificatoreAFF} (see \cite[Section 6]{3rdOderAffPDE_preprint} for more details), shows that the subset  $\E:=\{f=0\}$ of $J^3$, where
 \begin{align}
f&=6 u_{xx} u_{xxx} u_{xy} u_{yy}
   u_{yyy}-6 u_{xx} u_{xxx} u_{xyy}
   u_{yy}^2-18 u_{xx} u_{xxy} u_{xy}
   u_{xyy} u_{yy}+12 u_{xx} u_{xxy}
   u_{xy}^2 u_{yyy} \label{eqFormulaSospirataXXX}\\
  &-6 u_{xx}^2 u_{xxy}
   u_{yy} u_{yyy}
   +9 u_{xx} u_{xxy}^2
   u_{yy}^2-6 u_{xx}^2 u_{xy} u_{xyy}
   u_{yyy}+9 u_{xx}^2 u_{xyy}^2
   u_{yy}+u_{xx}^3 u_{yyy}^2-6 u_{xxx}
   u_{xxy} u_{xy} u_{yy}^2\nonumber\\
 &+12 u_{xxx}
   u_{xy}^2 u_{xyy} u_{yy}-8 u_{xxx}
   u_{xy}^3 u_{yyy}+u_{xxx}^2 u_{yy}^3\, ,\nonumber
\end{align}
is invariant with respect to the group $\Aff(3)$.  In \cite{3rdOderAffPDE_preprint} it is also shown   that the same subset $\E$, in the real case, shows  two different characters, depending on whether it projects over the open subset $\det u_{ij}>0$, or $\det u_{ij}<0$: the former  corresponds to the invariant PDE associated with $\CO(2)=\CO(0,2)$, the latter to the invariant PDE associated with $\CO(1,1)$; see  also Section \ref{subEsempioA3} below. In the first case, the invariant PDE is actually a \emph{system} of two PDEs: this corresponds to \eqref{eqFormulaSospirataXXX} being the sum of two positive quantities; in the second case, the invariant subset $\E$ turns out to be the  \emph{union} of two scalar PDEs. 
\end{example}

\subsection{Complex  $\CO_n$-invariant  hypersurfaces   in $ S^3_0(\C^n)$, with  $n=3,4$}
The departing point of the main Theorem~\ref{corCasoAff}  is a $\CO(d,n-d)$--invariant hypersurface $\Sigma$ in the trace--free third symmetric power $S_0^3\R^{n\,\ast}$ of the $n$--dimensional real vector space $\R^{n\,\ast}$. While a general classification in the real case is still unattainable, much can be said in the case of small values of $n$, if we work  over the field of complex number.\par
Therefore, only in this section, $V = \mathbb{C}^n$  is going to be a complex vector space, with $n=3,4$: having set $W := S^3_0(V)$,   we shall study complex  $\CO(V)$--invariant  hypersurfaces $\Sigma$ in $W$; in particular, there will be no signature, so that we consider the complex conformal group $\CO(V)=\CO_n(\C)$, rather that its split real counterparts $\CO_{d,n-d}(\R)$.\par
More accurately, we will derive a description of   complex invariant  hypersurfaces   $ \Sigma$ in  the  irreducible  $\CO(V)$--module  $W = S^3_0(V)$  of  traceless  symmetric 3--forms  of the    standard  module $V = \mathbb{C}^n $  for $n=3$   and,  partially,  for  $n=4$  from  the  known   results  of invariants' theory, see \cite{Parshin1994-wu, Springer1977-qd}; afterwards, one can reduce    the  description    of the real   hypersurfaces that are invariant  with respect to the 
  corresponding  normal real forms  $\CO_{1,2}(\R)$  and  $\SO_{2,2}(\R)$, as well as with respect to the  compact  real forms  $\CO_3(\R)$ and $\CO_4(\R)$, to the     description of the real forms of the above--obtained complex   hypersurfaces.\par
By employing the same notation of \cite{Onishchik1990-by}, we  will   denote by  $R(k\pi_1)$ the   irreducible  representation
   of the  simple Lie algebra  $\mathfrak{so}_n(\C)$,  whose  highest weight  is $k \pi_1$,  always assuming that   $n \geq 3$  and denoting  by  $\pi_1$    the  first  fundamental weight of $\mathfrak{so}_n(\C)$: in particular,   $R(\pi_1)$ is the tautological  representation  in the  space $V = \C^n$  and  $R(3\pi_1)$ is the highest  irreducible  component $W=S^3_0V$ in  the     symmetric  cube  $S^3V$.
   \subsubsection{The complex case with $n=3$}
    Recall  that  the  Lie  algebra $\mathfrak{so}_3(\C) $   is  isomorphic   to the Lie algebra $\sll(U) = \sll_2(\C)$   and  that all  irreducible $\sll_2(\C)$--modules   are exhausted by   the   symmetric power $S^k U$ of the tautological   module  $U = \C^2$.  The  tensor product  $S^k U \otimes S^{\ell}U'$  is decomposed    into irreducible submodules by the Klebsh--Gordon formula
    $$  S^kU \otimes S^{\ell}U' = \sum_{i=0}^{\infty} S^{k + \ell -2i}U. $$
    The   tautological  representation   of $\so_3(\C) = \sll_2(\C)$   is the  adjoint  representation   $ V = S^2U $ and the  representation
    $$ R(3 \pi_1) = S_0^3(V) = S^3_0(S^2V) = S^6(U).  $$
    This is  the   $\sll_2(\C)$-module  of  binary  forms   of order  $6$. The  full  algebra  of  (polynomial) invariants  $\C[S^6(U)]^{\sll_2(\C)}$ is known,  see \cite{Parshin1994-wu}.  It is   generated  by 5  invariants  $f_2, f_4,f_6,f_{10}, f_{15}$, of degrees 2, 4, 6, 10, 15, where  the   last  invariant  $f_{15} \in A:=\C [f_2, f_4,f_6, f_{10} ]$  and  the  algebra $A$ is the  algebra of polynomials  in  four  (independent)   variables $f_i$.
     
    \begin{theorem}
    Any  complex  $\SO(V)$--invariant hypersurface  in $S^3_0V$, with $V = \C^3$  has the  form $\Sigma^c_f = \{ f= c \}$   where  $f \in  \C[ f_2, f_4, f_6, f_{10} f_{15}]$ and $c\in\C$ is a constant.  Any $\CO(V)$-invariant  hypersurface  has the form
    $\Sigma^0_f = \{ f=0 \}$ where   $f = f(f_2, f_4, f_6, f_{10}, f_{15})$ is  a homogeneous  polynomial of $f_i$, $\deg(f_i)=i$.
        \end{theorem}
 Moreover,  any   homogeneous   invariant   hypersurface   of degree    $\leq  210$ has the  form  $f=0$  where    the  polynomial  $f$ is  given in  Table~\ref{tab1}; the explicit  form  of the  generators    can be found  in \cite{Springer1977-qd}.\par 
    \begin{table}\caption{Invariant hypersurfaces for $n=3$ and $d\leq 10$}\label{tab1}
    \begin{tabular}{l|l}
        degree $d$ &  polynomial $f$ \\
        \hline
        2 & $f=f_2 $\\ 4 & $f=a f_2^2 + b f_4$  \\
        6 & $f=a f_2^2 + b f_2 f_4 + c f_6$\\
        8 &  $f=a f_2^4 + b f_2^2 f_4 + c f_2 f_6 + d f_4^2$ \\                                10 & $f=a f_2^5 + b f_2^3 f_4 + c f_2^2 f_6 + d f_4 f_6 + e f_{10}$\, 
    \end{tabular}
    \end{table}

\subsubsection{The complex case with $n=4$}

    Consider now the case $n=4$. Then $ \so(V) =\so_4(\C) = \so(U) + \so(U'),\,  U = U' = \C^2$ and the  tautological module  is  $V = U \otimes U'$.
     Then  $S^2_0V = S^2U \otimes S^2U'$
 and
 $$V \otimes S^2_0(V) = SU \otimes S^2U \otimes U' \otimes S^2U' = (S^3U + U) \otimes  (S^3U' + U').$$
  Then   $S_0^3 V =  S^3 U \otimes S^3 U' $.\par 
 It is known that  the  algebra  of invariants of the $\sll_2(\C)$-module  $S^3 U$ of  ternary forms   is  generated  by the  discriminant  $\delta$, see \cite{Springer1977-qd}, where for
 $$p(x,y) = a_0 x^3 + a_1 x^2y + a_2 xy^2 + a_3y^3 $$
 the discriminant is 
 $$\delta(p) = a_1^2 a_2^2-4 a_0a_2^3 -4 a_1^3 a_3 -27a_0^2a_3^2+ 18 a_0 a_1 a_2 a_3.$$
  Hence the algebra of invariants $\C[S^3_0 V ]^{\so_3(\C)}$ contains $\delta$ and $\delta'$.
  \begin{theorem}
  Any polynomial  $f= f(\delta, \delta') $  defines  an invariant hypersurface  $f = c$, where $c\in\C$ is constant. 
   Any  homogeneous polynomial  $f= f(\delta, \delta')$ defines  an $\CO(V)$ invariant  hypersurface  $f=0$.
     \end{theorem}
 We stress that not all invariants   are  polynomials of $\delta$ and $\delta'$, that is, there    may be other invariants.
\subsubsection{A glimpse into the real case}
A standard method to cook out real invariants, having at one's disposal the complex ones, is by means of the anti--involution  $\sigma$ in $\so(\C^n)$, i.e.,   the  complex conjugation  in $\C^n$: the  anti--involution  $\sigma$   determines  the  real form   $\so(k, \ell)$   and then    the  real  and the imaginary  parts of the   complex  generators of   the algebra   $A = \C[\C^n]^{\so(\C^n)}$  turn out to be  real  invariants that  generate  the whole  real  algebra  of invariants. But there is a catch: the so--obtained real generators might be   dependent.\par 
Even though, in general, the    description of   a  minimal system of generators   of $A$  is  a very complicated  problem, in practice it is possible to  describe   the   invariants  in  small degrees $k$.\par
The  so--called   symbolic method for  constructing   invariants boils down  to obtaining   scalar  invariants   by contracting   tensor products
   $y_1 \otimes y_2 \otimes   \cdots \otimes y_k$  of   cubic  forms  with the  inverse   metric  $g^{ij}$.   For example,   for  $k=2$, one can    construct  the   invariant
   $I =  y_{ijk} z_{i' j' k'} g^{i i'} g^{j j'} g^{kk'}$;   in  the  case of binary  form,  one has  to use  also the  determinant $\det y_{ij}$.\par 
This way, one  can  get a  description of the   invariants  in  small  degree.

\subsection{The $\Aff(3)$ case}\label{subEsempioA3} Going back to the real--differentiable setting, if we set $n=2$, then it is is easy to use the results contained into Theorem \ref{corCasoAff} and Lemma \ref{lemPrimoLemmaSemplificatoreAFF} above   to  write down explicitly the unique $\Aff(3)$--invariant scalar third--order PDE $\E$ imposed on hypersurfaces of $\AAA^3$. To clarify what we mean by ``unique'', it should be stressed from the outset that, in general,  the $\Aff(n+1)$--invariant PDE   $\E\subset J^3$ constructed according to Theorem \ref{corCasoAff} projects onto an open subset $\check{J}^2$ of $J^2$: this is a direct consequence of the assumption (A1) on the action of $G$, see above Section \ref{sec:descriptions}. In turn, there are as many open subsets  $\check{J}^2$, as the $\GL(n)$--equivalence classes of fiducial hypersurfaces \eqref{eqSupFidAFF}: if we denote by $d_n$   the ceiling of $n/2$, then these classes are labeled by the signatures
\begin{equation*}
    (n,0)\, ,(n-1,1)\, ,\ldots\, , (n-d_n, d_n)\, ,
\end{equation*}
i.e., there is $d_n+1$ of them. The union of all the subsets $\check{J}^2$ is dense in $J^2$ and its boundary is the unique $2\Nd$ order $\Aff(n+1)$--invariant PDE, that is the Monge--Amp\`ere equation $\det\Hess(u)=0$: see also the proof of the assumption (A1) of Proposition \ref{corExPropFidHypCasoAff}.\par
In the case $n=2$, we have only two open subsets of $J^2$, corresponding to the Riemanian  $(+,+)$ and to the Lorenzian $(+,-)$ signature of the Hessian of the surface in $\AAA^3$, denoted respectively by $\check{J}^2_+$ and $\check{J}^2_-$. 
In view of the important link between the $\Aff(3)$--invariant PDEs and the geometry of affine surfaces,
%
below we sketch the relation between such PDEs and the Fubini-Pick invariant.


Let $u=f(x^1,\dots,x^n)$ describe a hypersurface $S$ of $\AAA^{n+1}$ which is the graph of the function $f$. Let us consider the basis
$$
\big(\partial_u\,,\,D^{(1)}_1\,,\dots,D^{(1)}_n\big)=\big(\partial_u\,,\,\partial_{x^1}+u_1\partial_u\,,\dots\,,\partial_{x^n}+u_n\partial_u\big)\,.
$$
The above basis is \emph{unimodular} as $\det(\partial_u\,,\,D^{(1)}_1\,,\dots,D^{(1)}_n)=1$. The components of the Blaschke metric $G$ are
\begin{equation}\label{eqn:Blaschke.metric}
G_{ij}=\rho u_{ij}
\end{equation}
where
\begin{equation*}
\rho=\left[\det\left( u_{ij}\right)\right]^{-\frac{1}{n+2}}\,,
\end{equation*}
whereas the component of the Fubini-Pick cubic form $C$ are
$$
C_{ijk}=-\frac12\left(  \rho u_{ijk} + f_{ij}D_k(\rho) + f_{jk}D_i(\rho)  + f_{ik}D_j(\rho) \right)\,,
$$
where $D_h$ are the total derivatives, see also  \eqref{eqn:total.derivatives}.
The Fubini-Pick invariant is the function defined as
\begin{equation}\label{eqPickContr}
    G^{i_1i_2}G^{j_1j_2}G^{h_1h_2}C_{i_1 j_1 h_1}C_{i_2 j_2 h_2}
\end{equation}
which, in the case $n=2$ and up to a non-zero factor, is equal to the right-hand side term of \eqref{eqFormulaSospirataXXX}.
%
%
Then the 
$\Aff(3)$--invariant PDE is $\E:=\{f=0\}$, with $f$ given by \eqref{eqFormulaSospirataXXX}, see   \cite{3rdOderAffPDE_preprint} for more details.
\par
Another approach, based on the study of the singularities of the group action, that has been used in \cite{MR2406036}, lead to the very same equation \eqref{eqFormulaSospirataXXX}.\par
We stress that the equation $\E$ projects onto the whole of $J^2$, because \eqref{eqFormulaSospirataXXX} is defined on the whole  $J^3$: however, if we take the intersections
\begin{equation*}
    \E\cap\check{J}^2_+\, ,\quad \E\cap\check{J}^2_-\, ,\quad 
\end{equation*}
we obtain precisely the two equations, say,  $\E_{\Sigma_+}$ and $\E_{\Sigma_-}$, that come from Theorem  \ref{corCasoAff}: they correspond to the $\CO(2)$--invariant subset  $\Sigma_+:=\{0\}$ and to the $\CO(1,1)$--invariant subset  $\Sigma_-$ made of two invariant lines, respectively. In other words,
\begin{equation*}
    \E=\overline{\E_{\Sigma_+}\cup\E_{\Sigma_-}}\, ,
\end{equation*}
whence the adjective ``unique''. 

\end{document}